\newtheorem{theorem}{Theorem}[section]
\newtheorem{corollary}{Corollary}[section]
\newtheorem{remark}{Remark}[section]
\newcounter{theor}
\newtheorem{thm}[theor]{Theorem}
\def\conv{\mathop\mathrm{conv}\nolimits}
\def\R{\mathbb{R}}
\def\N{\mathbb{N}}
\def\Z{\mathbb{Z}}
\def\vol{\mathrm{vol}}
\newcommand{\dlat}{\mathrm{d}}
\def\esc#1{\left\langle #1\right\rangle}
\def\ceil#1{\lceil#1\rceil}
\def\floor#1{\lfloor#1\rfloor}
\def\2Z{2^{-m}\Z^n}
\def\Gsub#1{\mathrm{G}_{#1}}
\def\G{\mathrm{G}_n}
\def\symbol{\diamond}
\def\Media#1#2#3#4{\mathcal{M}_{#1}^{#4}\left(#2,#3\right)}
\def\Suma#1#2#3#4{\mathcal{S}_{#1}^{#4}\left(#2,#3\right)}
\def\Rm{\mathrm{R}^m}
\def\Om{\mathrm{O}^m}
\numberwithin{equation}{section}
\begin{document}

\title[On discrete $L_p$ Brunn-Minkowski type inequalities]{On discrete $L_p$ Brunn-Minkowski type inequalities}

\author{Mar\'\i a A. Hern\'andez Cifre}
\author{Eduardo Lucas}
\author{Jes\'us Yepes Nicol\'as}
\address{Departamento de Matem\'aticas, Universidad de Murcia, Campus de
Espinar\-do, 30100-Murcia, Spain}
\email{mhcifre@um.es}
\email{eduardo.lucas@um.es}
\email{jesus.yepes@um.es}

\thanks{The work is partially supported by MICINN/FEDER project PGC2018-097046-B-I00
and by ``Programa de Ayudas a Grupos de Excelencia de la Regi\'on de Murcia'', Fundaci\'on S\'eneca,
19901/GERM/15.}

\subjclass[2010]{Primary 52C07, 39B62; Secondary 52A40}

\keywords{$L_p$ Brunn-Minkowski inequality, lattice point enumerator, $p$-sum, Borell-Brascamp-Lieb inequality}

\begin{abstract}
$L_p$ Brunn-Minkowski type inequa\-li\-ties for the lattice point enumerator $\G(\cdot)$ are shown, both in a geometrical and in a functional setting.
In particular, we prove that \[\G\bigl((1-\lambda)\cdot K +_p \lambda\cdot L + (-1,1)^n\bigr)^{p/n}\geq (1-\lambda)\G(K)^{p/n}+\lambda\G(L)^{p/n}\] for any $K, L\subset\R^n$ bounded sets with integer points and all $\lambda\in(0,1)$.
We also show that these new discrete analogues (for $\G(\cdot)$) imply the corresponding results concerning the Lebesgue measure.
\end{abstract}

\maketitle

\section{Introduction}

The classical \emph{Brunn-Minkowski inequality} for
non-empty compact subsets $K, L$ of the $n$-dimensional Euclidean space $\R^n$ asserts
that, for any $\lambda\in(0,1)$,
\begin{equation}\label{e:BM}
\vol\bigl((1-\lambda) K+\lambda L\bigr)^{1/n}\geq
(1-\lambda)\vol(K)^{1/n}+\lambda\vol(L)^{1/n}.
\end{equation}
Here $\vol(\cdot)$ denotes the $n$-dimensional Lebesgue measure
(when integrating, $\dlat x$ will stand for $\dlat \vol(x)$) and $+$ is used for the \emph{Minkowski addition}, i.e., $A+B=\{a+b:\, a\in A, \, b\in B\}$ for any non-empty sets $A, B\subset\R^n$. Moreover, $\lambda A$ represents the set
$\{\lambda a:\, a\in A\}$, for $\lambda\geq0$.

The Brunn-Minkowski inequality has become not only a cornerstone of the
Brunn-Minkowski theory (for which we refer the reader to the updated
monograph \cite{Sch2}) but also a powerful tool in other related fields of
mathematics. Among other analogues of it we emphasize its analytic version,
the so-called \emph{Borell-Brascamp-Lieb inequality},
which implies a whole uniparametric family of Brunn-Minkowski type
inequalities.  For extensive survey articles on this and other related inequalities
we refer the reader to \cite{Brt,G}.

When dealing with convex bodies (compact convex sets) $K,L\subset\R^n$ containing the origin,
the following generalization of the classical Min\-kows\-ki addition, usually
referred to as the $p$\emph{-sum} $K+_pL$ of $K$ and $L$, was introduced by Firey \cite{F62}:
for $1\leq p\leq\infty$ fixed, there exists a (unique) convex body $K+_p L$ whose support
function is given by
\begin{equation}\label{e:def_p-sum}
h(K+_p L,\cdot)= \bigl(h(K,\cdot)^p + h(L,\cdot)^p\bigr)^{1/p}.
\end{equation}
When $p=\infty$ this must be interpreted as its limit case, i.e.,
$h(K+_\infty L,\cdot)=\max\bigl\{h(K,\cdot), h(L,\cdot)\bigr\}$, as is customary.
We recall that the support function of a convex body $K\subset\R^n$ is defined by $h(K,u)=\max\bigl\{\esc{x,u}:x\in K\bigr\}$, for all $u\in\R^n$ (see e.g. \cite[Section~1.7]{Sch2}).
One may also define a $p$-scalar multiplication by
$\lambda \cdot_p K:=\lambda^{1/p}K$, for any $\lambda\geq0$.
We observe that although this notion surely depends on $p$, we will use the notation $\cdot$ (instead of $\cdot_p$) throughout the manuscript when this scalar multiplication is used together with the $p$-sum $+_p$.
Moreover, given $\lambda,\mu\geq0$, we write $\lambda \cdot K+_p\mu \cdot L$ for $(\lambda \cdot K)+_p(\mu \cdot L)$.
Clearly, when $p=1$ the latter set recovers the classical linear combination $\lambda K+\mu L$ (cf. \eqref{e:def_p-sum}), whereas the case $p=\infty$ yields
\[
\lambda\cdot K+_{\infty}\mu\cdot L=\conv(K\cup L).
\]

The main disadvantage of the previous definition of $p$-sum is that it is defined via $p$-means of the support functions of the convex bodies (containing the origin) there involved, which implies the necessity of assuming convexity, unlike what happens for the usual Minkowski sum. Lutwak, Yang and Zhang \cite{LYZ} extended the $p$-sum, for $1\leq p<\infty$, to the case of arbitrary subsets of the Euclidean space, by showing that there is a pointwise definition of it, similar to that of the Minkowski addition: for any $K,L\subset\R^n$,
\begin{equation}\label{e:p-sum_extended}
K+_pL:=\left\{(1-\mu)^{1/q}x+\mu^{1/q}y:x\in K,y\in L, \, 0\leq\mu\leq1\right\},
\end{equation}
where $q$ is the H\"older conjugate of $p$ (i.e., such that
$1/p+1/q=1$). From now on, given $p\geq1$, the notation $q$ will have this meaning, unless stated otherwise.

In \cite{LYZ} it is shown that the definition in \eqref{e:p-sum_extended} coincides with the one given by \eqref{e:def_p-sum} when $K$ and $L$ are $n$-dimensional convex bodies containing the origin. Moreover, in the case when $p=1$ (and hence $q=\infty$), the coefficients $(1-\mu)^{1/q},\mu^{1/q}$ must be understood as $1$ for all $0\leq\mu\leq 1$, and thus $K+_1L$ equals $K+L$. Furthermore, as shown in \cite{LYZ}, one has
\begin{equation*}\label{e:1-mean<p-mean}
(1-\lambda)\cdot K+_p\lambda\cdot L \supset(1-\lambda)K+\lambda L
\end{equation*}
for all $0\leq\lambda\leq 1$.
Finally, we would like to mention that, although \eqref{e:p-sum_extended} makes also sense for $p=\infty$ (and so $q=1$), we will omit this case throughout the manuscript (following \cite{LYZ} too), since for such a value of $p$ all the results trivially hold. So, along the rest of the paper, when writing $p\geq1$ we will refer to a real number $p\geq1$.

The $L_p$ version of the Brunn-Minkowski inequality \eqref{e:BM} was originally proven by Firey \cite{F62},
in the setting of convex bodies containing the origin, and by Lutwak, Yang and Zhang (see~\cite[Theorem~4]{LYZ}) for arbitrary non-empty compact sets:
\begin{thm}\label{t:lpbm}
Let $\lambda\in(0,1)$ and $p\geq 1$, and let $K,L\subset\R^n$ be non-empty compact sets. Then
\begin{equation}\label{ineq:lpbm}
\vol\bigl((1-\lambda)\cdot K+_p\lambda\cdot L\bigr)^{p/n}\geq(1-\lambda)\vol(K)^{p/n}+\lambda\vol(L)^{p/n}.
\end{equation}
\end{thm}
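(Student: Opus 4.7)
The plan is to reduce to the classical Brunn-Minkowski inequality \eqref{e:BM} by exploiting the pointwise description \eqref{e:p-sum_extended} of the $p$-sum. Unfolding \eqref{e:p-sum_extended} for $(1-\lambda)\cdot K+_p\lambda\cdot L=\bigl((1-\lambda)^{1/p}K\bigr)+_p\bigl(\lambda^{1/p}L\bigr)$, for every fixed $\mu\in[0,1]$ the set
\[
A_\mu:=(1-\mu)^{1/q}(1-\lambda)^{1/p}K+\mu^{1/q}\lambda^{1/p}L,
\]
which is an ordinary Minkowski combination of scaled copies of $K$ and $L$, is contained in $(1-\lambda)\cdot K+_p\lambda\cdot L$. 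Applying \eqref{e:BM} to $A_\mu$ then gives
\[
\vol\bigl((1-\lambda)\cdot K+_p\lambda\cdot L\bigr)^{1/n}\geq (1-\mu)^{1/q}(1-\lambda)^{1/p}\vol(K)^{1/n}+\mu^{1/q}\lambda^{1/p}\vol(L)^{1/n}.
\]

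The next step is to optimize the right-hand side over $\mu\in[0,1]$. I would apply Hölder's inequality to the pairs $\bigl((1-\mu)^{1/q},\mu^{1/q}\bigr)$ and $\bigl((1-\lambda)^{1/p}\vol(K)^{1/n},\lambda^{1/p}\vol(L)^{1/n}\bigr)$, with conjugate exponents $q$ and $p$, obtaining
\[
(1-\mu)^{1/q}(1-\lambda)^{1/p}\vol(K)^{1/n}+\mu^{1/q}\lambda^{1/p}\vol(L)^{1/n}\leq\bigl((1-\lambda)\vol(K)^{p/n}+\lambda\vol(L)^{p/n}\bigr)^{1/p},
\]
with equality when the two sequences are proportional; explicitly at $\mu^{*}=\lambda\vol(L)^{p/n}\big/\bigl[(1-\lambda)\vol(K)^{p/n}+\lambda\vol(L)^{p/n}\bigr]$, assuming $\vol(K),\vol(L)>0$. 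Choosing $\mu=\mu^{*}$ in the inequality above and raising to the $p$-th power then yields \eqref{ineq:lpbm}.

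I do not expect any real obstacle: the scaling $\lambda\cdot K=\lambda^{1/p}K$ and the exponents $1/p,1/q$ are tailored, via $1/p+1/q=1$, so that the geometric and analytic pieces fit exactly and Hölder becomes an equality at the right value of $\mu$. The only mild points to verify are that $\mu^{*}$ lies in $[0,1]$ (automatic, being a convex combination weight) and the degenerate cases $\vol(K)=0$ or $\vol(L)=0$, which follow at once by specializing to $\mu=1$ or $\mu=0$ and noting that $(1-\lambda)\cdot K+_p\lambda\cdot L$ contains both $(1-\lambda)\cdot K$ and $\lambda\cdot L$.
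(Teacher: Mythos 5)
Your proof is correct. The paper only quotes this theorem from Lutwak--Yang--Zhang without reproving it, but your argument --- take the Minkowski slice $A_\mu=(1-\mu)^{1/q}(1-\lambda)^{1/p}K+\mu^{1/q}\lambda^{1/p}L\subset(1-\lambda)\cdot K+_p\lambda\cdot L$, apply the classical Brunn--Minkowski inequality, and choose the $\mu^{*}$ at which H\"older's inequality is tight --- is precisely what the paper calls ``the underlying idea of the original proof of \eqref{ineq:lpbm} given in \cite{LYZ}'' and reproduces, with the same choice of $\mu_0=\mu^{*}$, in its proof of Theorem~\ref{t:Lp-BBL_discreta}.
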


Around three decades after the introduction given by Firey for the $p$-sum of convex bodies (containing the origin),
Lutwak \cite{L93,L96} initiated a deep and systematic study of $p$-additions and their consequences.
This new and remarkable extension of the classical Brunn-Minkowski theory, usually referred to in the literature as the $L_p$ \emph{Brunn-Minkowski theory}, is not only a very active area of research nowadays, but it has further supposed to be the starting point for new developments and generalizations. An example of the latter can be seen in \cite{GHW,GHW2,Me} and the references therein, where the authors perform a thorough investigation into the fundamental characte\-ristics of operations between sets and provide with an elegant construction that allows one to define a general pointwise operation between sets. For more information on the $L_p$ \emph{Brunn-Minkowski theory} and its consequences we refer the reader to \cite[Section~9.1]{Sch2}.

\smallskip

In the discrete setting of $\Z^n$ endowed with the cardinality $|\cdot|$, Gardner and Gronchi \cite{GG}
obtained an engaging and powerful analogue of the following form of the Brunn-Minkowski inequality: $\vol(K+L)\geq\vol(B_K+B_L)$, where $B_K$ and $B_L$ are centered Euclidean balls of the same volume as
the convex bodies $K$ and $L$, respectively. Moreover, from the above-mentioned discrete version, they derive
some inequalities that improve previous results obtained by Ruzsa in
\cite{Ru1,Ru2}.

More recently, different discrete analogues of the Brunn-Minkowski
ine\-quality have been obtained, including the case of its classical form
(cf. \eqref{e:BM}) for the cardinality \cite{GT,HCIYN,IYNZ}, functional
extensions of it \cite{HKS,IYN,IYNZ,KL,Sl} and versions for the \emph{lattice
point enumerator} $\G(\cdot)$ \cite{HKS,ILYN,IYNZ}, which is defined by
$\G(M)=|M\cap\Z^n|$, $M\subset\R^n$. In this respect, in \cite{IYNZ} it is shown the necessity of extending
$(1-\lambda)K+\lambda L$ to $(1-\lambda)K+\lambda L+(-1,1)^n$ in order to get a discrete analogue of \eqref{e:BM}
for all $\lambda\in(0,1)$, as follows:
\begin{thm}\label{t: BM_lattice_point_no_G(K)G(L)>0}
Let $\lambda\in(0,1)$ and let $K,L\subset\R^n$ be non-empty bounded sets.
Then
\begin{equation}\label{e: BM_lattice_point_no_G(K)G(L)>0}
\G\bigl((1-\lambda)K+\lambda L+(-1,1)^n\bigr)^{1/n}\geq(1-\lambda)\G(K)^{1/n}+\lambda\G(L)^{1/n}.
\end{equation}
The inequality is sharp.
\end{thm}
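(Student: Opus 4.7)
The idea is to interpolate between the lattice point enumerator and the Lebesgue measure by fattening each integer point into an open unit cube, thereby reducing the problem to the classical Brunn-Minkowski inequality \eqref{e:BM}. Assuming first that $\G(K), \G(L) \geq 1$, I would set $A_K := (K \cap \Z^n) + (-1/2, 1/2)^n$ and $A_L := (L \cap \Z^n) + (-1/2, 1/2)^n$. Since open unit cubes centered at distinct integer points are pairwise disjoint, $A_K$ and $A_L$ are bounded Borel sets with $\vol(A_K) = \G(K)$ and $\vol(A_L) = \G(L)$, and \eqref{e:BM} directly yields
\[
\vol\bigl((1-\lambda)A_K + \lambda A_L\bigr)^{1/n} \geq (1-\lambda)\G(K)^{1/n} + \lambda \G(L)^{1/n}.
\]

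The second ingredient is the elementary tiling estimate $\vol(N) \leq \G\bigl(N + (-1/2, 1/2]^n\bigr)$, valid for any bounded Borel $N$: partitioning $\R^n$ into the half-open unit cubes $z + [-1/2, 1/2)^n$, $z \in \Z^n$, the set $N$ meets at most $|\{z \in \Z^n : z \in N + (-1/2, 1/2]^n\}|$ of them, each of volume $1$. I would apply this with $N := (1-\lambda)A_K + \lambda A_L$. Using $A_K \subset K + (-1/2, 1/2)^n$, $A_L \subset L + (-1/2, 1/2)^n$, together with the two identities $(1-\lambda)(-1/2, 1/2)^n + \lambda (-1/2, 1/2)^n = (-1/2, 1/2)^n$ and $(-1/2, 1/2)^n + (-1/2, 1/2]^n = (-1, 1)^n$ (the latter immediate coordinate by coordinate), one obtains
\[
N + (-1/2, 1/2]^n \subset (1-\lambda)K + \lambda L + (-1, 1)^n.
\]
Monotonicity of $\G$ then chains all the bounds into the claimed inequality after taking $n$th roots.

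The delicate point is precisely the bookkeeping of open versus half-open cubes, so that the two successive unit-cube enlargements combine to exactly $(-1, 1)^n$ rather than any strictly larger set; any looser accounting would weaken the constant and destroy sharpness. Degenerate cases $\G(K) = 0$ or $\G(L) = 0$ are handled by replacing the empty $A_\bullet$ by a singleton taken from $K$ or $L$ and substituting the application of \eqref{e:BM} by a direct volume-scaling computation, since \eqref{e:BM} is unavailable when one factor is empty. Finally, sharpness of \eqref{e: BM_lattice_point_no_G(K)G(L)>0} is witnessed by $K = L = [0, N]^n$ for any $N \in \N$, in which case both sides evaluate to $(N+1)^n$.
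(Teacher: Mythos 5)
This statement (Theorem~\ref{t: BM_lattice_point_no_G(K)G(L)>0}) is quoted in the paper from the reference \cite{IYNZ} and is not proved in this manuscript, so there is no internal proof to compare against; I can only assess your argument on its own terms. It is correct and complete. The two ingredients are sound: fattening $K\cap\Z^n$ and $L\cap\Z^n$ into disjoint open unit cubes gives sets of volume exactly $\G(K)$ and $\G(L)$, and the tiling bound $\vol(N)\leq \G\bigl(N+(-1/2,1/2]^n\bigr)$ is verified by the observation that $N$ meets $z+[-1/2,1/2)^n$ precisely when $z\in N+(-1/2,1/2]^n$. The cube bookkeeping also checks out: $(1-\lambda)(-1/2,1/2)^n+\lambda(-1/2,1/2)^n=(-1/2,1/2)^n$ by convexity, and coordinatewise $(-1/2,1/2)+(-1/2,1/2]=(-1,1)$ since the endpoint $1$ would require $a+b=1$ with $a<1/2$, $b\leq 1/2$. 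Your handling of the degenerate case $\G(L)=0$, $\G(K)\geq 1$ (replace $A_L$ by a point $y_0\in L$, so that $(1-\lambda)A_K+\lambda y_0+(-1/2,1/2]^n$ still lands inside $(1-\lambda)K+\lambda L+(-1,1)^n$ and has volume $(1-\lambda)^n\G(K)$) works, and the sharpness example $K=L=[0,N]^n$ is the standard one. The only cosmetic point worth tightening is the application of \eqref{e:BM}: as stated it is for compact sets, while $A_K$, $A_L$ are open; either invoke the measurable (Lusternik) version, noting that $(1-\lambda)A_K+\lambda A_L$ is open hence measurable, or apply \eqref{e:BM} to the closures $\overline{A_K}$, $\overline{A_L}$ and observe that $(1-\lambda)\overline{A_K}+\lambda\overline{A_L}$ is a finite union of closed boxes whose volume equals that of the corresponding open combination, and is still contained (after adding $(-1/2,1/2]^n$) in $(1-\lambda)K+\lambda L+[-1,1]^n$ --- here one should keep the open cubes as you do, since the closed version would overshoot $(-1,1)^n$. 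As written, with open cubes throughout, no such overshoot occurs and the constant is exactly preserved.
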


Here we are mainly interested in finding a discrete counterpart to \eqref{ineq:lpbm}, or equivalently,
in getting an $L_p$ version of \eqref{e: BM_lattice_point_no_G(K)G(L)>0}. In this regard, we show the following:
\begin{theorem}\label{t:p-B-M_discrete}Let $\lambda\in(0,1)$ and $p\geq 1$, and let $K,L\subset\R^n$ be bounded sets with $\G(K)\G(L)>0$. Then
\begin{equation}\label{ineq:dlpbm}
\G\bigl((1-\lambda)\cdot K+_p\lambda\cdot
L+(-1,1)^n\bigr)^{p/n}\geq(1-\lambda)\G(K)^{p/n}+\lambda\G(L)^{p/n}.
\end{equation}
The inequality is sharp.
\end{theorem}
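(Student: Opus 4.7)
The strategy is a lattice-point analogue of Lutwak--Yang--Zhang's proof of Theorem~\ref{t:lpbm}: exploit the pointwise description \eqref{e:p-sum_extended} of the $p$-sum to reduce to the $p=1$ discrete Brunn--Minkowski inequality (Theorem~\ref{t: BM_lattice_point_no_G(K)G(L)>0}), and then optimize over the free parameter $\mu$.

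Setting $a_\mu:=(1-\mu)^{1/q}(1-\lambda)^{1/p}$ and $b_\mu:=\mu^{1/q}\lambda^{1/p}$, \eqref{e:p-sum_extended} gives
\(
(1-\lambda)\cdot K +_p \lambda\cdot L \supseteq a_\mu K+b_\mu L
\)
for every $\mu\in[0,1]$, and H\"older's inequality yields $a_\mu+b_\mu\leq 1$, with equality iff $\mu=\lambda$. The technical heart of the proof is the following weighted form of Theorem~\ref{t: BM_lattice_point_no_G(K)G(L)>0}: for all $a,b\geq 0$ with $a+b\leq 1$ and all bounded $K,L\subset\R^n$ with $\G(K)\G(L)>0$,
\begin{equation*}
\G\bigl(aK+bL+(-1,1)^n\bigr)^{1/n}\geq a\,\G(K)^{1/n}+b\,\G(L)^{1/n}. \tag{$\ast$}
\end{equation*}
For $a+b=1$ this is exactly Theorem~\ref{t: BM_lattice_point_no_G(K)G(L)>0}; the new content is the case $a+b<1$, where the slack in the cushion $(-1,1)^n$ must be exploited.

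Granted $(\ast)$, the rest is a standard H\"older optimization. Applying $(\ast)$ with $(a,b)=(a_\mu,b_\mu)$ yields
\[
\G\bigl((1-\lambda)\cdot K +_p \lambda\cdot L + (-1,1)^n\bigr)^{1/n}\geq a_\mu\,\G(K)^{1/n}+b_\mu\,\G(L)^{1/n}
\]
for every $\mu\in[0,1]$. Raising to the $p$-th power and using the identity
\[
\sup_{\mu\in[0,1]}\bigl((1-\mu)^{1/q}u+\mu^{1/q}v\bigr)^p = u^p+v^p,
\]
which is the equality case of H\"older's inequality, applied with $u=(1-\lambda)^{1/p}\G(K)^{1/n}$ and $v=\lambda^{1/p}\G(L)^{1/n}$, gives exactly $(1-\lambda)\G(K)^{p/n}+\lambda\G(L)^{p/n}$, proving \eqref{ineq:dlpbm}. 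Sharpness is verified by taking $K=L=\{0,1,\dots,N\}^n$ and letting $N\to\infty$, where the two sides become asymptotically equal.

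The main obstacle is $(\ast)$. The natural attempt -- write $aK+bL=(a+b)[\tfrac{a}{a+b}K+\tfrac{b}{a+b}L]$ and combine Theorem~\ref{t: BM_lattice_point_no_G(K)G(L)>0} with a scaling estimate -- reduces $(\ast)$ to an auxiliary monotonicity of the form $\G(sN+(-1,1)^n)\geq s^n\G(N+(-1,1)^n)$ for $s\in(0,1]$, which turns out to be delicate and can in fact be violated by suitable nearly-integer-spaced finite sets in dimension one. A more robust route is to first establish a functional discrete Borell--Brascamp--Lieb-type inequality for $\G(\cdot)$ -- the ``functional setting'' alluded to in the abstract -- and to deduce $(\ast)$, and hence Theorem~\ref{t:p-B-M_discrete}, by specializing it to the characteristic functions of $K$ and $L$; this is the discrete analogue of how the continuous $L_p$ Brunn--Minkowski inequality is recovered from the analytic Borell--Brascamp--Lieb inequality.
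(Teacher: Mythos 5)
Your reduction is, at the geometric level, exactly the one the paper uses: the paper proves Theorem~\ref{t:p-B-M_discrete} by specializing the functional inequality \eqref{e:Lp-BBL_discreta} (with $\alpha=\infty$) to characteristic functions, and the proof of that functional inequality is precisely your scheme --- the inclusion $(1-\lambda)\cdot K+_p\lambda\cdot L\supseteq a_\mu K+b_\mu L$ coming from \eqref{e:p-sum_extended}, an application of a $(t,s)$-weighted discrete Brunn--Minkowski inequality with $t+s\leq1$, and then the choice of the optimal $\mu$ via the equality case of H\"older (the paper's $\mu_0$ coincides with yours when $f=\chi_{_K}$, $g=\chi_{_L}$). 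Your optimization step and your sharpness example are both fine (in fact for $K=L=\{0,\dots,N\}^n$ equality holds exactly, not just asymptotically, since the $p$-combination is squeezed between $\{0,\dots,N\}^n$ and $[0,N]^n$).

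The genuine gap is your inequality $(\ast)$ for $a+b<1$, which you correctly identify as the technical heart and then do not prove. This is precisely Theorem~\ref{t:BM_tK+sL_(gen_dim)}, i.e.\ inequality \eqref{e:BM_tK+sL_(gen_dim)} with $\lceil t+s\rceil=1$, which the paper does not prove either but imports from \cite{ILYN}; it is a nontrivial result whose content for $t+s<1$ really is new relative to Theorem~\ref{t: BM_lattice_point_no_G(K)G(L)>0}, and, as you observe, it does not follow from the convex-combination case by rescaling. However, your proposed escape route --- first establish a functional discrete Borell--Brascamp--Lieb inequality and then deduce $(\ast)$ from it by plugging in characteristic functions --- is circular in this framework: the discrete functional inequalities (Theorem~\ref{t:BBL_discreta}, Theorem~\ref{t:BBL_discreta_ts}, Theorem~\ref{t:Lp-BBL_discreta}) are all \emph{derived from} the geometric lattice-point inequalities for $\G(\cdot)$, not the other way around, so a functional statement strong enough to yield $(\ast)$ would already presuppose it. To close the gap you must either prove \eqref{e:BM_tK+sL_(gen_dim)} directly (by the compression/dyadic-approximation arguments of \cite{IYNZ,ILYN}) or cite it explicitly as an external ingredient.
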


For any fixed $p\geq1$, the Minkowski addition of the cube $(-1,1)^n$ on the left-hand side of the latter inequality cannot be, in general, neither reduced (by means of a smaller cube) nor substituted by its $p$-sum (see Remark~\ref{r:(-1,1)^n_cannot_be_reduced}). And again, as in the classical framework, the case of
$p=1$ of this result reco\-vers \eqref{e: BM_lattice_point_no_G(K)G(L)>0}. Furthermore, we
show that the $L_p$ Brunn-Minkowski inequality \eqref{ineq:lpbm}, in the setting of $n$-dimensional convex
bodies, can be derived as a consequence of this new discrete inequality for the lattice point enumerator $\G(\cdot)$:
\begin{theorem}\label{t:L_pBM_disc_to cont}
The discrete $L_p$ Brunn-Minkowski type inequality \eqref{ineq:dlpbm} implies the $L_p$ Brunn-Minkowski inequality \eqref{ineq:lpbm} for $n$-dimensional convex bo\-dies $K$ and $L$.
\end{theorem}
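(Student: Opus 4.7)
The plan is a standard scaling-and-limit argument: apply the discrete inequality \eqref{ineq:dlpbm} to the dilates $rK,rL$ for large $r>0$ and pass to the limit $r\to\infty$, exploiting that the lattice point enumerator of a sufficiently large dilate of an $n$-dimensional convex body approximates its volume.

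Since $K$ and $L$ are $n$-dimensional, both have non-empty interior, and so $\G(rK)\G(rL)>0$ for every $r$ large enough. Moreover, the pointwise description \eqref{e:p-sum_extended} makes it immediate that the $p$-sum is positively homogeneous of degree one under ordinary dilation, i.e.,
\[
(1-\lambda)\cdot(rK)+_p\lambda\cdot(rL)=r\bigl((1-\lambda)\cdot K+_p\lambda\cdot L\bigr)=:rC.
\]
Applying \eqref{ineq:dlpbm} to the pair $rK,rL$ and dividing both sides by $r^p$ thus yields
\[
\left(\frac{\G(rC+(-1,1)^n)}{r^n}\right)^{p/n}\geq(1-\lambda)\left(\frac{\G(rK)}{r^n}\right)^{p/n}+\lambda\left(\frac{\G(rL)}{r^n}\right)^{p/n}.
\]

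Next, one checks that every quotient above tends to the volume of the corresponding set as $r\to\infty$. For $\G(rK)/r^n$ and $\G(rL)/r^n$ this is the classical Riemann-sum estimate for a convex body of positive volume (the error is of order $r^{n-1}$ by a standard boundary-counting argument). For the leftmost quotient the sandwich
\[
r^n\vol(C)\leq\G\bigl(rC+(-1,1)^n\bigr)\leq\vol\bigl(rC+(-3/2,3/2)^n\bigr)
\]
does the job: the lower bound follows since every unit cube $z+[0,1)^n$ meeting $rC$ has its corner $z$ lying in $rC+(-1,0]^n\subseteq rC+(-1,1)^n$, and the upper bound is the elementary inequality $\G(A)\leq\vol\bigl(A+[-1/2,1/2]^n\bigr)$ obtained by surrounding each lattice point of $A$ by its concentric unit cube. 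After dividing by $r^n$ the right-hand side becomes $\vol\bigl(C+(-3/(2r),3/(2r))^n\bigr)$, which decreases monotonically to $\vol(C)$ as $r\to\infty$ since $C$ is compact.

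Letting $r\to\infty$ in the scaled inequality then delivers exactly \eqref{ineq:lpbm}. The only mildly delicate point is that $C=(1-\lambda)\cdot K+_p\lambda\cdot L$ is merely a continuous image of $K\times L\times[0,1]$, hence compact but not in general convex, so Steiner-type expansion formulas for $\vol(rC+K_0)$ are unavailable; the elementary sandwich above, however, sidesteps this issue cleanly and is the only nonroutine ingredient of the argument.
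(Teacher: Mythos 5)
Your argument is correct. The homogeneity $(1-\lambda)\cdot(rK)+_p\lambda\cdot(rL)=r\bigl((1-\lambda)\cdot K+_p\lambda\cdot L\bigr)$ follows at once from \eqref{e:p-sum_extended}, the sandwich $r^n\vol(C)\leq\G\bigl(rC+(-1,1)^n\bigr)\leq\vol\bigl(rC+(-3/2,3/2)^n\bigr)$ is valid for the compact (not necessarily convex) set $C$, and the right-hand limits use exactly where the $n$-dimensionality and convexity of $K$ and $L$ enter (Jordan measurability, so that $\G(rK)/r^n\to\vol(K)$). That said, your route differs from the paper's. The paper obtains Theorem~\ref{t:L_pBM_disc_to cont} as an immediate corollary of the functional transference result (Theorem~\ref{t:bblp_disc_to_cont}), applied to characteristic functions of convex bodies, which are Riemann integrable and upper semicontinuous; the real work there is a lattice-refinement argument over $2^{-m}\Z^n$ involving sup-convolutions $f_m,g_m,h_m$, superlevel sets, Fubini and monotone convergence. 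Your proof is the purely geometric specialization of that same ``refine/dilate and pass to the limit'' idea, carried out directly with elementary lattice-point-versus-volume estimates, so it is shorter and self-contained but proves only the geometric statement, whereas the paper's detour through the Borell--Brascamp--Lieb setting yields the more general functional theorem at the same time. Two cosmetic points: you could note explicitly that $\G(rK)\G(rL)>0$ only for $r\geq r_0$, which is harmless since you let $r\to\infty$; and the monotone decrease of $\vol\bigl(C+(-3/(2r),3/(2r))^n\bigr)$ to $\vol(C)$ rests on continuity from above of the Lebesgue measure together with the closedness of $C$, which your compactness remark already supplies.
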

In fact, we will prove these results on the lattice point enumerator $\G(\cdot)$ by showing (the more general version of) their functional counterpart (see Theorems~\ref{t:Lp-BBL_discreta} and~\ref{t:bblp_disc_to_cont}).

\smallskip

The paper is organized as follows: in Section \ref{s:main} we recall some preliminaries and we state our main results (in the functional setting), whereas their proofs will be established in Section \ref{s:proofs}.

\section{Functional results: Background and main results}\label{s:main}

As mentioned before, we will obtain Theorem \ref{t:p-B-M_discrete} as a direct consequence of its functional analogue. To introduce it, we recall the analytical counterpart (for functions) of the Brunn-Minkowski inequality, the so-called \emph{Borell-Brascamp-Lieb inequality}, originally proven in
\cite{Borell} and \cite{BL}. For its statement, we first need to give the definition of the $\alpha$-sum
$\Suma{\alpha}{\cdot}{\cdot}{t,s}$ of two non-negative numbers, with positive coefficients $t$ and $s$,
where $\alpha$ is a parameter varying in $\R\setminus\{0\}\cup\{\pm\infty\}$, as well as the notion of $\alpha$-mean
$\Media{\alpha}{\cdot}{\cdot}{\lambda}$, with $\lambda\in(0,1)$, for $\alpha\in\R\cup\{\pm\infty\}$
(for a general reference for $\alpha$-sums and means of non-negative numbers, we refer the reader to the classic text of Hardy, Littlewood and P\'olya \cite{HaLiPo} and to the handbook \cite{Bu}). We consider first
the case $\alpha\in\R$, with $\alpha\neq0$: given $a,b>0$, let
\[
\Suma{\alpha}{a}{b}{t,s}=(ta^\alpha+s b^\alpha\bigr)^{1/\alpha}.
\]
For $\alpha=\pm \infty$ we set $\Suma{\infty}{a}{b}{t,s}=\max\{a,b\}$ and
$\Suma{-\infty}{a}{b}{t,s}=\min\{a,b\}$. Furthermore, if $ab=0$, we define $\Suma{\alpha}{a}{b}{t,s}=0$ for all $\alpha\in\R\setminus\{0\}\cup\{\pm\infty\}$, and moreover,
when $t=s=1$ we just write
\[\Suma{\alpha}{a}{b}{}=\Suma{\alpha}{a}{b}{1,1}.\]
Finally, for any $\alpha\neq0$ we set \[\Media{\alpha}{a}{b}{\lambda}=\Suma{\alpha}{a}{b}{1-\lambda,\lambda}\] whereas for $\alpha=0$ we write $\Media{0}{a}{b}{\lambda}=a^{1-\lambda}b^{\lambda}$.


\smallskip

The reason to modify in this way (when $ab=0$) the definition of $\alpha$-sums given in \cite{HaLiPo} is due to the classical statement of the Borell-Brascamp-Lieb inequality, which is
collected below. In fact, without such a modification (although redundant for any $\alpha\leq0$),
if we do not assume $f(x)g(y)>0$ in \eqref{e:BBL_means_hyp}, the thesis of this result would not have mathematical interest when $\alpha>0$.

\begin{thm}[The Borell-Brascamp-Lieb inequality]\label{t:BBL_means}
Let $\lambda\in(0,1)$. Let $-1/n\leq \alpha\leq\infty$ and let $f,g,h:\R^n\longrightarrow\R_{\geq0}$ be integrable
functions such that
\begin{equation}\label{e:BBL_means_hyp}
h\bigl((1-\lambda)x +\lambda y\bigr)\\
 \geq\bigl[(1-\lambda)f(x)^{\alpha}+\lambda g(y)^{\alpha}\bigr]^{1/\alpha}
\end{equation}
for all $x,y\in\R^n$ with $f(x)g(y)>0$. Then
\begin{equation*}\label{e:BBL_means}
\int_{\R^n}h(x)\,\dlat x\geq
\Media{\frac{\alpha}{n\alpha+1}}{\int_{\R^n}f(x)\,\dlat x}{\int_{\R^n}g(x)\,\dlat x}{\lambda}.
\end{equation*}
\end{thm}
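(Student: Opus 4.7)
The plan is to prove this classical statement by the standard two-step argument: first establish the one-dimensional case, and then lift it to $\R^n$ by induction on the dimension combined with Fubini's theorem. The exponent $\alpha/(n\alpha+1)$ arises naturally because it is the fixed point, under the map $\beta\mapsto\beta/(\beta+1)$ iterated $n$ times, of the mean-exponent transformation that governs a single one-dimensional integration.

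\smallskip

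First, I would dispose of the degenerate cases: if $\int f = 0$ or $\int g = 0$, the right-hand side is either $0$ or reduces to a single integral, and the claim is trivial or follows from monotonicity via $h\geq f$ (resp.\ $h\geq g$) on suitable translates of the support. For the main case, I would address $n=1$ directly. Normalizing, set $u(t)=\int_{-\infty}^t f/\int_\R f$ and $v(t)=\int_{-\infty}^t g/\int_\R g$ and let $x(s),y(s)$ be their generalized inverses on $[0,1]$. Define $z(s)=(1-\lambda)x(s)+\lambda y(s)$. Then
\[
z'(s)=(1-\lambda)\frac{\int_\R f}{f(x(s))}+\lambda\frac{\int_\R g}{g(y(s))}
\]
a.e., and the change of variables $x=z(s)$ together with hypothesis \eqref{e:BBL_means_hyp} yields
\[
\int_\R h(x)\,\dlat x\;\geq\;\int_0^1 h\bigl(z(s)\bigr)\,z'(s)\,\dlat s\;\geq\;\int_0^1 \Media{\alpha}{f(x(s))}{g(y(s))}{\lambda}\,z'(s)\,\dlat s.
\]
The integrand is then pointwise estimated by the key Hölder-type mean inequality
\[
\Media{\alpha}{a}{b}{\lambda}\Bigl[(1-\lambda)\tfrac{A}{a}+\lambda\tfrac{B}{b}\Bigr]\;\geq\;\Media{\alpha/(\alpha+1)}{A}{B}{\lambda},
\]
valid precisely for $\alpha\geq-1$; applying it with $a=f(x(s))$, $b=g(y(s))$, $A=\int f$, $B=\int g$ yields the desired bound, since the right-hand side is constant in $s$.

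\smallskip

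For the inductive step, assuming the result in dimension $n-1$, I would fix $x_1,y_1\in\R$, set $z_1=(1-\lambda)x_1+\lambda y_1$, and consider the slice functions $f_{x_1}(x')=f(x_1,x')$, $g_{y_1}(y')=g(y_1,y')$, $h_{z_1}(z')=h(z_1,z')$ on $\R^{n-1}$. The hypothesis \eqref{e:BBL_means_hyp} passes to slices, and since $\alpha\geq-1/n\geq-1/(n-1)$, the inductive hypothesis gives
\[
H(z_1):=\int_{\R^{n-1}}h_{z_1}(z')\,\dlat z'\;\geq\;\Media{\alpha/((n-1)\alpha+1)}{F(x_1)}{G(y_1)}{\lambda},
\]
where $F(x_1)=\int f_{x_1}$, $G(y_1)=\int g_{y_1}$. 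Denoting $\beta=\alpha/((n-1)\alpha+1)$, one checks $\beta\geq-1$ (equivalent to $\alpha\geq-1/n$), so the one-dimensional case applies to $F,G,H$ and gives $\int_\R H\geq \Media{\beta/(\beta+1)}{\int F}{\int G}{\lambda}$. A straightforward computation yields $\beta/(\beta+1)=\alpha/(n\alpha+1)$, and Fubini's theorem identifies $\int F=\int f$, $\int G=\int g$, $\int H=\int h$, completing the induction.

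\smallskip

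The main obstacle, and the content of the proof, is the one-dimensional Hölder-type mean inequality above; it is here that the sharp threshold $\alpha\geq-1$ (and hence $\alpha\geq-1/n$ in dimension $n$) enters, and verifying it cleanly — particularly across the endpoint cases $\alpha\in\{-1,0,\infty\}$ treated by continuity of $\alpha$-means — is the delicate step. Secondary technicalities are the handling of the set where $f$ or $g$ vanishes (which is why hypothesis \eqref{e:BBL_means_hyp} is assumed only on $\{f(x)g(y)>0\}$) and the justification of the change of variables through the generalized inverses $x(s),y(s)$; both are standard but require some care to make rigorous.
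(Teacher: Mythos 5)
The paper does not prove Theorem~\ref{t:BBL_means}: it is quoted as a classical result with references to Borell and Brascamp--Lieb, so there is no in-paper proof to compare against. Your argument is the standard classical proof of that result (one-dimensional case via monotone reparametrization by the normalized distribution functions plus the H\"older-type inequality $\mathcal{M}_{\alpha}^{\lambda}(a,b)\,\mathcal{M}_{1}^{\lambda}(A/a,B/b)\geq\mathcal{M}_{\alpha/(\alpha+1)}^{\lambda}(A,B)$ for $\alpha\geq-1$, then induction on dimension through Fubini, with the exponent arithmetic $\beta/(\beta+1)=\alpha/(n\alpha+1)$ checked correctly), and it is correct in outline, coinciding with the proofs in the cited sources; the technical points you flag (degenerate integrals, the vanishing set of $f,g$, and the change of variables for the generalized inverses) are indeed the only places requiring care.
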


Taking into account the definition of $p$-sum given by \eqref{e:p-sum_extended}, it is natural to wonder about the possibility of extending the above result to the $L_p$ setting by suitably modifying the condition on the functions there involved (cf. \eqref{e:BBL_means_hyp}). Such an expected $L_p$ version of the Borell-Brascamp-Lieb inequality
has been very recently obtained in \cite{RoXi}  (shown independently, for the case of $\alpha>0$, in \cite{Wu}):
\begin{thm}\label{t:L_p-BBL}
Let $\lambda\in(0,1)$ and $p\geq1$. Let $-1/n\leq\alpha\leq\infty$ and let $f,g,h:\R^n\longrightarrow\R_{\geq0}$ be integrable functions such that
\begin{equation}\label{e:L_p-BBL_hyp}
\begin{split}
h\Bigl((1-\lambda)^{1/p}(1-\mu)^{1/q}x & +\lambda^{1/p}\mu^{1/q}y\Bigr)\\
 & \geq\Bigl[(1-\lambda)^{1/p}(1-\mu)^{1/q}f(x)^{\alpha}+\lambda^{1/p}\mu^{1/q}g(y)^{\alpha}\Bigr]^{1/\alpha}
\end{split}
\end{equation}
for all $x,y\in\R^n$ with $f(x)g(y)>0$ and all $\mu\in[0,1]$. Then
\begin{equation*}\label{e:L_p-BBL}
\int_{\R^n}h(x)\,\dlat x\geq
\Media{\frac{p\alpha}{n\alpha+1}}{\int_{\R^n}f(x)\,\dlat x}{\int_{\R^n}g(x)\,\dlat x}{\lambda}.
\end{equation*}
\end{thm}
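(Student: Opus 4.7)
The plan is to treat the hypothesis, for \emph{fixed} $\mu\in(0,1)$, as a weighted Borell-Brascamp-Lieb condition, to apply Theorem~\ref{t:BBL_means} in that form, and then to take the supremum of the resulting bound over $\mu\in[0,1]$ via a sharp H\"older inequality; this last optimization is precisely what converts the classical $\alpha/(n\alpha+1)$-mean into the $L_p$ mean $p\alpha/(n\alpha+1)$, and it mirrors at the level of integrals the pointwise description \eqref{e:p-sum_extended} of the $p$-sum.

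To carry this out, fix $\mu\in(0,1)$ and set $a=(1-\lambda)^{1/p}(1-\mu)^{1/q}$, $b=\lambda^{1/p}\mu^{1/q}$, both positive. The hypothesis \eqref{e:L_p-BBL_hyp} then reads $h(ax+by)\geq\bigl[af(x)^\alpha+bg(y)^\alpha\bigr]^{1/\alpha}$ for all $x,y$ with $f(x)g(y)>0$. Introducing $\widetilde\lambda=b/(a+b)\in(0,1)$ and $\widetilde h(z)=h\bigl((a+b)z\bigr)\big/(a+b)^{1/\alpha}$, this is equivalent to
\[
\widetilde h\bigl((1-\widetilde\lambda)x+\widetilde\lambda y\bigr)\ \geq\ \bigl[(1-\widetilde\lambda)f(x)^\alpha+\widetilde\lambda g(y)^\alpha\bigr]^{1/\alpha},
\]
which is the hypothesis of Theorem~\ref{t:BBL_means} applied to $(f,g,\widetilde h)$ with parameter $\widetilde\lambda$. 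That theorem, combined with the change of variables $\int_{\R^n}\widetilde h\,\dlat x=(a+b)^{-n-1/\alpha}\int_{\R^n}h\,\dlat x$ and the algebraic identity $(a+b)^{1/\beta}\bigl[(1-\widetilde\lambda)A^\beta+\widetilde\lambda B^\beta\bigr]^{1/\beta}=\bigl[aA^\beta+bB^\beta\bigr]^{1/\beta}$ (valid for $A,B\geq0$, where $\beta:=\alpha/(n\alpha+1)$), yields
\[
\int_{\R^n}h(x)\,\dlat x\ \geq\ \biggl[a\Bigl(\int_{\R^n}f(x)\,\dlat x\Bigr)^{\beta}+b\Bigl(\int_{\R^n}g(x)\,\dlat x\Bigr)^{\beta}\biggr]^{1/\beta}.
\]

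Since this estimate holds for every $\mu\in(0,1)$, one now optimizes the right-hand side in $\mu$. Writing the bracket as $(1-\mu)^{1/q}X+\mu^{1/q}Y$ with $X=(1-\lambda)^{1/p}\bigl(\int_{\R^n}f\,\dlat x\bigr)^\beta$ and $Y=\lambda^{1/p}\bigl(\int_{\R^n}g\,\dlat x\bigr)^\beta$, the sharp H\"older inequality $(1-\mu)^{1/q}X+\mu^{1/q}Y\leq(X^p+Y^p)^{1/p}$ (attained at $\mu^\ast=Y^p/(X^p+Y^p)$) turns the above bound into
\[
\int_{\R^n}h(x)\,\dlat x\ \geq\ \Media{p\alpha/(n\alpha+1)}{\int_{\R^n}f(x)\,\dlat x}{\int_{\R^n}g(x)\,\dlat x}{\lambda},
\]
which is the desired conclusion. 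The main obstacle I anticipate is the clean handling of the degenerate regimes: the endpoints $\alpha\in\{0,\pm\infty\}$, where the exponent $\beta$ and the renormalization $(a+b)^{1/\alpha}$ must be interpreted as appropriate limits (and $\alpha=0$ requires a multiplicative version of the key identity); the values $\mu\in\{0,1\}$, at which $a$ or $b$ vanishes and the H\"older optimum is only approached in the limit (so one should fix the optimizer $\mu^\ast$ a priori, provided $X,Y>0$, and treat $X=0$ or $Y=0$ separately); and verifying that the assumption \eqref{e:L_p-BBL_hyp}, which is only made when $f(x)g(y)>0$, transfers correctly to the rescaled triple $(f,g,\widetilde h)$ so that Theorem~\ref{t:BBL_means} really applies.
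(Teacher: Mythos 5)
Your argument is correct and is essentially the route the paper itself indicates (in the remark following the proof of Theorem~\ref{t:Lp-BBL_discreta}): fix $\mu$, apply the classical Borell--Brascamp--Lieb inequality to the linear combination with coefficients $t=(1-\lambda)^{1/p}(1-\mu)^{1/q}$ and $s=\lambda^{1/p}\mu^{1/q}$, and then choose $\mu$ to be the H\"older optimizer $\mu^\ast$, which converts the resulting $\beta$-sum (with $\beta=\alpha/(n\alpha+1)$) into the $p\beta$-mean. The only cosmetic difference is that you derive the linear-combination version of Theorem~\ref{t:BBL_means} from the convex-combination one by rescaling $h$, whereas the paper cites Borell's form directly; your caveat about fixing $\mu^\ast$ a priori (using the equality case of H\"older rather than a supremum) is indeed the right way to phrase the last step when $\beta<0$.
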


\smallskip

Before going on, we would like to clarify the special case of $\alpha=0$ in condition \eqref{e:L_p-BBL_hyp}
of the previous result:
\begin{remark}\label{r:alpha=0}
On the one hand, it is clear that
\[\lim_{\alpha\to0^+}\bigl(sa^{\alpha}+rb^{\alpha}\bigr)^{1/\alpha}
=\left\{
\begin{array}{ll}
a^{1-r}b^{r} & \text{ if } s+r=1,\\
0 & \text{ if } s+r<1.
\end{array}
\right.
\]
On the other hand, we have $(1-\lambda)^{1/p}(1-\mu)^{1/q}+\lambda^{1/p}\mu^{1/q}=1$ if $\mu=\lambda$
and \[(1-\lambda)^{1/p}(1-\mu)^{1/q}+\lambda^{1/p}\mu^{1/q}<1\] for all $\mu\in[0,1]$ with $\mu\neq\lambda$, by H\"older's inequality (jointly with its equality case, see e.g.~\cite[Theorem~11]{HaLiPo}). Then, by convention,
the case $\alpha=0$ in \eqref{e:L_p-BBL_hyp} will be understood as
\begin{equation*}
h\bigl((1-\lambda)x +\lambda y\bigr)
\geq f(x)^{1-\lambda}g(y)^{\lambda}
\end{equation*}
for all $x,y\in\R^n$. In other words, the case $\alpha=0$ in Theorem~\ref{t:L_p-BBL} is the same to the one in
Theorem~\ref{t:BBL_means}, i.e., the classical \emph{Pr\'ekopa-Leindler inequality}.
\end{remark}

\smallskip

For the statement of the next result, we first need to introduce some additional notation. From now on we will write $\chi_{_M}$ to represent the characteristic function of a given set $M\subset\R^n$,
namely,
\[\chi_{_M}(x)=\left\{
\begin{array}{ll}
1 & \text{ for } x\in M,\\
0 & \text{ for } x\in\R^n\setminus M.
\end{array}
\right.
\]
Moreover, for a function $\phi:\R^n\longrightarrow\R_{\geq0}$ we denote by $\phi^{\symbol}:\R^n\longrightarrow\R_{\geq0}$ the function defined by	
\[\phi^{\symbol}(z) = \sup_{u\in(-1,1)^n}\phi(z+u)  \quad \text{ for all } z\in\R^n.\]
Such an extension of $\phi$ is just the \emph{Asplund product} $\star$ of the functions $\phi$ and $\chi_{(-1,1)^n}$, which can be seen as the functional analogue of the Minkowski sum of sets in the setting of log-concave functions. Indeed,
\begin{equation*}
\begin{split}
\phi^{\symbol}(z) &= \sup_{u\in(-1,1)^n}\phi(z+u) =\sup_{u \in \R^n} \phi(z+ u) \, \chi_{(-1,1)^n}(-u)\\
&=\sup_{u_1+u_2=z} \phi(u_1) \, \chi_{(-1,1)^n}(u_2) = \bigl(\phi \star \chi_{(-1,1)^n}\bigr)(z).
\end{split}
\end{equation*}
For more information on the Asplund product, also known as the \emph{sup-convolution}, we refer the reader to \cite[Section~9.5]{Sch2} and the references therein.

Taking into account this notation, the following discrete
Borell-Brascamp-Lieb inequality was shown in \cite{IYNZ}:

\begin{thm}\label{t:BBL_discreta}
Let $\lambda\in(0,1)$ and let $K, L \subset \R^n$ be non-empty
bounded sets. Let $-1/n\leq\alpha\leq\infty$ and let $f,g,h:\R^n\longrightarrow \R_{\geq 0}$ be non-negative
functions such that
\[
h\bigl((1-\lambda)x+\lambda y\bigr)\geq
    \bigl[(1-\lambda)f(x)^{\alpha}+\lambda g(y)^{\alpha}\bigr]^{1/\alpha}
\]
for all $x\in K$, $y\in L$ with $f(x)g(y)>0$. Then
\begin{equation*}\label{e:BBL_discreta}
\sum_{z\in (M+(-1,1)^n)\cap\Z^n}\!\!h^\symbol(z)\geq\!
\Media{\frac{\alpha}{n\alpha+1}}{\sum_{x\in K\cap\Z^n}\!\!f(x)}{\sum_{y\in L\cap\Z^n}\!\!g(y)}{\lambda},
\end{equation*}
where $M=(1-\lambda)K+\lambda L$.
\end{thm}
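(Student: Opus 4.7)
The plan is to derive Theorem~\ref{t:BBL_discreta} from the classical Borell-Brascamp-Lieb inequality (Theorem~\ref{t:BBL_means}) by replacing each of the three functions $f,g,h$ with a step-function analogue supported on translated unit cubes, chosen so that the Lebesgue integrals of these replacements reproduce exactly the three discrete sums appearing in the statement.

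Concretely, first associate to $f$ and $g$ the step functions
\[
F:=\sum_{x\in K\cap\Z^n}f(x)\,\chi_{x+[0,1)^n},\qquad G:=\sum_{y\in L\cap\Z^n}g(y)\,\chi_{y+[0,1)^n},
\]
which are integrable (the sums are finite by boundedness of $K$ and $L$) with $\int F=\sum_{x\in K\cap\Z^n}f(x)$ and $\int G=\sum_{y\in L\cap\Z^n}g(y)$, by disjointness of the translated cubes. For $h$, use the Asplund-type extension $h^{\symbol}$ sampled at the lattice and spread over unit cubes: set
\[
H(z):=h^{\symbol}\bigl(\floor{z}\bigr)\,\chi_{S}\bigl(\floor{z}\bigr),\qquad S:=\bigl(M+(-1,1)^n\bigr)\cap\Z^n,
\]
where $\floor{\cdot}$ denotes the coordinatewise floor. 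Since $H$ is constant on each lattice cube $z_0+[0,1)^n$ and is supported on a finite union of such cubes (as $M$ is bounded), one has $\int H=\sum_{z_0\in S}h^{\symbol}(z_0)$, exactly matching the desired left-hand side.

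The key step is to verify that $F,G,H$ satisfy the hypothesis of Theorem~\ref{t:BBL_means}. If $F(u)G(v)>0$, there exist unique $x\in K\cap\Z^n$ and $y\in L\cap\Z^n$ with $u\in x+[0,1)^n$, $v\in y+[0,1)^n$ and $f(x)g(y)>0$; then $F(u)=f(x)$ and $G(v)=g(y)$. Setting $z:=(1-\lambda)u+\lambda v$ and $z_0:=\floor{z}$, the displacement
\[
z_0-\bigl((1-\lambda)x+\lambda y\bigr)=(z_0-z)+\bigl((1-\lambda)(u-x)+\lambda(v-y)\bigr)
\]
lies in $(-1,0]^n+[0,1)^n\subset(-1,1)^n$. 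In particular $z_0\in S$, and $(1-\lambda)x+\lambda y$ lies in $z_0+(-1,1)^n$, so the definition of $h^{\symbol}$ combined with the hypothesis on $f,g,h$ yields
\[
H(z)=h^{\symbol}(z_0)\geq h\bigl((1-\lambda)x+\lambda y\bigr)\geq\bigl[(1-\lambda)F(u)^\alpha+\lambda G(v)^\alpha\bigr]^{1/\alpha}.
\]
Applying Theorem~\ref{t:BBL_means} to $F,G,H$ then gives precisely the claimed bound for $\sum_{z_0\in S}h^{\symbol}(z_0)$ in terms of the $\alpha/(n\alpha+1)$-mean of $\sum_{x\in K\cap\Z^n}f(x)$ and $\sum_{y\in L\cap\Z^n}g(y)$.

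The main technical obstacle is the containment verification in the previous paragraph: it is exactly there that one sees why the enlargement by $(-1,1)^n$ in the definition of $S$ and the use of the Asplund extension $h^{\symbol}$ (rather than $h$ itself) are both indispensable, as $\floor{z}$ can be shifted by up to one unit in each coordinate away from the ``genuine'' midpoint $(1-\lambda)x+\lambda y$, and this is precisely the slack absorbed by the cube $(-1,1)^n$. Once this lattice-point location step is in place, everything else reduces mechanically to the continuous Borell-Brascamp-Lieb inequality.
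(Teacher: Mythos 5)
Your argument is correct. The step functions $F,G$ reproduce the discrete sums exactly; the location estimate $\floor{z}-\bigl((1-\lambda)x+\lambda y\bigr)\in(-1,0]^n+[0,1)^n=(-1,1)^n$ is right and is exactly what puts $\floor{z}$ into $\bigl(M+(-1,1)^n\bigr)\cap\Z^n$ while guaranteeing $h^\symbol(\floor{z})\geq h\bigl((1-\lambda)x+\lambda y\bigr)$. Only two harmless points go unmentioned: Theorem~\ref{t:BBL_means} asks for integrable data, so one should note that if some $h^\symbol(z_0)=+\infty$ the conclusion is trivial and otherwise $H$ is a finitely supported step function; and the $\alpha=0$ convention carries over verbatim. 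That said, your route is genuinely different from the paper's. The proof the paper relies on (from \cite{IYNZ}, and whose scheme is replicated for Theorem~\ref{t:BBL_discreta_ts}) stays entirely in the discrete world: it applies the discrete Brunn--Minkowski inequality \eqref{e: BM_lattice_point_no_G(K)G(L)>0} for the lattice point enumerator to the superlevel sets of $f$, $g$ and $h^\symbol$ and then integrates via the layer-cake formula. You instead transplant the discrete data into $\R^n$ by spreading it over unit cubes and invoke the \emph{continuous} Borell--Brascamp--Lieb inequality. Your version is shorter and needs no discrete geometric input, and the same cube bookkeeping would even give the $t,s$ variant with $(-1,\lceil t+s\rceil)^n$ from Borell's linear form of the continuous inequality. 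What it costs is the logical architecture of the paper: Theorems~\ref{t:L_pBM_disc_to cont} and~\ref{t:bblp_disc_to_cont} are advertised as showing that the discrete inequalities \emph{imply} the continuous ones, and that claim loses its point if the discrete inequality is itself deduced from the continuous one. So your proof is a valid proof of the statement, but not a drop-in replacement for the paper's if one wants to preserve the ``discrete implies continuous'' narrative.
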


\smallskip

In this paper we show the corresponding $L_p$ version of the latter result. In other words, we prove the following discrete analogue of Theorem~\ref{t:L_p-BBL}, which, in particular, will imply Theorem~\ref{t:p-B-M_discrete}:
\begin{theorem}\label{t:Lp-BBL_discreta}
Let $\lambda\in(0,1)$ and $p\geq 1$, and let $K,L\subset\R^n$ be
non-empty bounded sets. Let $-1/n\leq\alpha\leq\infty$ and let $f,g,h:\R^n\longrightarrow
\R_{\geq 0}$ be non-negative functions such that
\begin{equation}\label{e:Lp-BBL_discreta_hipotesis}
\begin{split}
h\Bigl((1-\lambda)^{1/p}(1-\mu)^{1/q}x & +\lambda^{1/p}\mu^{1/q}y\Bigr)\\
 & \geq\Bigl[(1-\lambda)^{1/p}(1-\mu)^{1/q}f(x)^{\alpha}+\lambda^{1/p}\mu^{1/q}g(y)^{\alpha}\Bigr]^{1/\alpha}
\end{split}
\end{equation}
for all $x\in K$, $y\in L$ with $f(x)g(y)>0$ and all $\mu\in[0,1]$. Then
\begin{equation}\label{e:Lp-BBL_discreta}
\sum_{z\in(M_p+(-1,1)^n)\cap\Z^n}\!\!\!\!\! h^\symbol(z)\geq\!
\Media{\frac{p\alpha}{n\alpha+1}}{\sum_{x\in
K\cap\Z^n}\!\!\!f(x)}{\sum_{y\in L\cap\Z^n}\!\!\!g(y)}{\lambda},
\end{equation}
where $M_p=(1-\lambda)\cdot K+_p\lambda\cdot L$.
\end{theorem}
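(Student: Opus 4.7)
The plan is to reduce Theorem~\ref{t:Lp-BBL_discreta} to the continuous $L_p$ Borell--Brascamp--Lieb inequality (Theorem~\ref{t:L_p-BBL}) via a step-function construction that turns the lattice sums into Lebesgue integrals. I may assume that both sums on the right-hand side of~\eqref{e:Lp-BBL_discreta} are positive and finite, since otherwise the inequality is trivial by the convention on $\alpha$-sums. Setting $C:=[0,1)^n$, I would associate to $f$, $g$ and $h^\symbol$ the piecewise-constant functions
\begin{align*}
\tilde{F}&:=\sum_{x_0\in K\cap\Z^n}f(x_0)\chi_{x_0+C},\qquad \tilde{G}:=\sum_{y_0\in L\cap\Z^n}g(y_0)\chi_{y_0+C},\\
\tilde{H}&:=\sum_{z_0\in(M_p+(-1,1)^n)\cap\Z^n}h^\symbol(z_0)\chi_{z_0+C},
\end{align*}
whose integrals over $\R^n$ recover exactly the three sums appearing in~\eqref{e:Lp-BBL_discreta}.

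The key step is to verify that $\tilde{F},\tilde{G},\tilde{H}$ satisfy hypothesis~\eqref{e:L_p-BBL_hyp} of Theorem~\ref{t:L_p-BBL}. Given $x,y\in\R^n$ with $\tilde{F}(x)\tilde{G}(y)>0$, write uniquely $x=x_0+c_1$ and $y=y_0+c_2$ with $x_0\in K\cap\Z^n$, $y_0\in L\cap\Z^n$ and $c_1,c_2\in C$. For any $\mu\in[0,1]$, put
\[
z^*:=(1-\lambda)^{1/p}(1-\mu)^{1/q}x+\lambda^{1/p}\mu^{1/q}y,\qquad p_0:=(1-\lambda)^{1/p}(1-\mu)^{1/q}x_0+\lambda^{1/p}\mu^{1/q}y_0\in M_p,
\]
and let $z_0:=\lfloor z^*\rfloor\in\Z^n$ (coordinate-wise floor), so that $z^*\in z_0+C$. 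Since $(1-\lambda)^{1/p}(1-\mu)^{1/q}+\lambda^{1/p}\mu^{1/q}\leq 1$ by H\"older's inequality, the displacement $z^*-p_0$ lies in $[0,1)^n$, whence $z_0-p_0=(z_0-z^*)+(z^*-p_0)\in(-1,1)^n$. This places $z_0$ in $(M_p+(-1,1)^n)\cap\Z^n$, so $\tilde{H}(z^*)=h^\symbol(z_0)$, and moreover $h^\symbol(z_0)\geq h(p_0)$ directly from the definition of $h^\symbol$. Applying~\eqref{e:Lp-BBL_discreta_hipotesis} at the pair $(x_0,y_0)\in K\times L$ (which is legitimate since $f(x_0)g(y_0)>0$) then yields
\[
\tilde{H}(z^*)\geq h(p_0)\geq\Bigl[(1-\lambda)^{1/p}(1-\mu)^{1/q}\tilde{F}(x)^\alpha+\lambda^{1/p}\mu^{1/q}\tilde{G}(y)^\alpha\Bigr]^{1/\alpha},
\]
i.e. precisely~\eqref{e:L_p-BBL_hyp}; the case $\alpha=0$ is identical using the convention recalled in Remark~\ref{r:alpha=0}.

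Once this verification is in place, Theorem~\ref{t:L_p-BBL} yields at once $\int_{\R^n}\tilde{H}\geq\Media{\frac{p\alpha}{n\alpha+1}}{\int_{\R^n}\tilde{F}}{\int_{\R^n}\tilde{G}}{\lambda}$, which by the identities of the opening paragraph is exactly~\eqref{e:Lp-BBL_discreta}. I expect the main technical point to be the bookkeeping of the previous paragraph, namely arranging that the integer floor $z_0$ of the ``continuous'' output $z^*$ always lies in the enlarged set $M_p+(-1,1)^n$ \emph{and} that the value $h^\symbol(z_0)$ recorded there dominates $h$ at the true lattice midpoint $p_0$. Both phenomena flow from the same H\"older bound $(1-\lambda)^{1/p}(1-\mu)^{1/q}+\lambda^{1/p}\mu^{1/q}\leq 1$, and they are exactly what forces the $(-1,1)^n$-enlargement and the appearance of $h^\symbol$ on the left-hand side of~\eqref{e:Lp-BBL_discreta}. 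The argument mirrors the proof of the $p=1$ analogue (Theorem~\ref{t:BBL_discreta}), the only new ingredient being the manipulation of the additional parameter $\mu$ in the coefficients.
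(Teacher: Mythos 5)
Your argument is correct, but it proves the theorem by a genuinely different route from the paper. You reduce the discrete statement to the \emph{continuous} $L_p$ Borell--Brascamp--Lieb inequality (Theorem~\ref{t:L_p-BBL}) by packaging the lattice sums into step functions supported on half-open unit cubes; the key bookkeeping --- that $z_0=\lfloor z^*\rfloor$ lands in $(M_p+(-1,1)^n)\cap\Z^n$ and that $h^\symbol(z_0)\geq h(p_0)$, both via the H\"older bound $(1-\lambda)^{1/p}(1-\mu)^{1/q}+\lambda^{1/p}\mu^{1/q}\leq1$ --- checks out, as does the identification of the integrals with the sums. The paper goes the other way around: it never invokes the continuous Theorem~\ref{t:L_p-BBL}, but instead fixes a single optimal parameter $\mu_0$ (chosen so that the resulting $(t,s)$-weighted $\beta$-sum coincides with the $p\beta$-mean), rewrites hypothesis \eqref{e:Lp-BBL_discreta_hipotesis} as a weighted condition in $t,s$, and feeds it into the auxiliary weighted discrete Borell--Brascamp--Lieb inequality (Theorem~\ref{t:BBL_discreta_ts}), which ultimately rests on the discrete Brunn--Minkowski inequality \eqref{e:BM_tK+sL_(gen_dim)} for the lattice point enumerator. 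What your approach buys is brevity and the avoidance of that auxiliary machinery; what it costs is self-containedness and, more importantly, the paper's logical architecture: since Theorem~\ref{t:bblp_disc_to_cont} is devoted to showing that the discrete inequality \emph{implies} the continuous one, deriving the discrete inequality \emph{from} the continuous one (a recent result of Roysdon--Xing and Wu, used as a black box) makes that implication circular as a claim that the discrete result strengthens the continuous one. Two minor points you should make explicit: every $h^\symbol(z_0)$ with $z_0$ in the (finite) index set may be assumed finite, since otherwise the left-hand side of \eqref{e:Lp-BBL_discreta} is infinite and there is nothing to prove (this is needed for $\tilde{H}$ to be integrable, as Theorem~\ref{t:L_p-BBL} requires); and the boundary cases $\alpha=0$ and $\alpha=\infty$ deserve a word on the conventions for the $\mu$-weighted expressions, though they cause no real difficulty.
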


As in the classical framework, the case $\alpha=0$ in this result is that of Theorem~\ref{t:BBL_discreta} (see Remark~\ref{r:alpha=0}).

We will also show that our discrete counterpart, Theorem~\ref{t:Lp-BBL_discreta}, implies the continuous result collected in Theorem~\ref{t:L_p-BBL}, under mild assumptions for the functions there involved.

\begin{theorem}\label{t:bblp_disc_to_cont}
The discrete $L_p$ Borell-Brascamp-Lieb type inequality (Theorem~\ref{t:Lp-BBL_discreta}) implies the (continuous) $L_p$ Borell-Brascamp-Lieb inequality (Theorem~\ref{t:L_p-BBL}), provided that the functions $f,g$ are Riemann integrable and $h$ is upper semicontinuous.
\end{theorem}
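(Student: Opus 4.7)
The plan is a scaling argument: apply the discrete $L_p$ Borell--Brascamp--Lieb inequality (Theorem~\ref{t:Lp-BBL_discreta}) along a sequence of finer lattices so that the resulting sums become Riemann sums and, in the limit, recover the continuous inequality. As a preliminary reduction I will assume $f$ and $g$ have bounded support, fixing large cubes $K, L \subset \R^n$ containing $\supp(f)$ and $\supp(g)$; the hypothesis \eqref{e:L_p-BBL_hyp} survives truncation because the condition $f(x)g(y)>0$ already confines $x, y$ to those supports. At the end I will let $K, L \to \R^n$ and invoke monotone convergence together with continuity of $\Media{\beta}{\cdot}{\cdot}{\lambda}$, where $\beta := p\alpha/(n\alpha+1)$. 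One may also assume $\int_{\R^n} f\,\dlat x$ and $\int_{\R^n} g\,\dlat x$ are both positive, otherwise the conclusion is trivial.

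For each $m \in \N$ I will apply Theorem~\ref{t:Lp-BBL_discreta} to the dilated bounded sets $2^m K$, $2^m L$ and the rescaled triple $f_m(x) := f(2^{-m}x)$, $g_m(x) := g(2^{-m}x)$, $h_m(x) := h(2^{-m}x)$. The hypothesis \eqref{e:Lp-BBL_discreta_hipotesis} for this triple follows from \eqref{e:L_p-BBL_hyp} via the change of variables $x = 2^m u$, $y = 2^m v$, and since $+_p$ and $\cdot$ commute with positive dilations the relevant Minkowski object is $2^m M_p$ with $M_p := (1-\lambda)\cdot K +_p \lambda\cdot L$. Substituting $w = 2^{-m} z$ on the left and dividing both sides of \eqref{e:Lp-BBL_discreta} by $2^{mn}$ (using positive homogeneity of $\Media{\beta}{\cdot}{\cdot}{\lambda}$), the inequality rewrites as
\[
\frac{1}{2^{mn}}\sum_{w \in (M_p+(-2^{-m},2^{-m})^n) \cap 2^{-m}\Z^n}\sup_{v\in(-2^{-m},2^{-m})^n}h(w+v)
\geq \Media{\beta}{\tfrac{1}{2^{mn}}\sum_{w\in K\cap 2^{-m}\Z^n} f(w)}{\tfrac{1}{2^{mn}}\sum_{w\in L\cap 2^{-m}\Z^n} g(w)}{\lambda}.
\]
By Riemann integrability of $f, g$ on the Jordan-measurable cubes $K, L$, the two Riemann sums on the right converge to $\int_{\R^n} f\,\dlat x$ and $\int_{\R^n} g\,\dlat x$, so continuity of $\Media{\beta}{\cdot}{\cdot}{\lambda}$ at points with positive coordinates forces the right-hand side to tend to $\Media{\beta}{\int_{\R^n} f\,\dlat x}{\int_{\R^n} g\,\dlat x}{\lambda}$.

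For the left-hand side I will show $\limsup_{m\to\infty} (\text{LHS}) \leq \int_{\R^n} h\,\dlat x$. Enlarging the index set to all of $2^{-m}\Z^n$ only increases the sum, and the enlarged sum equals $\int_{\R^n} \phi_m\,\dlat x$, where $\phi_m$ is the step function taking the value $\sup_{v\in(-2^{-m},2^{-m})^n} h(w+v)$ on the cell $w + [0, 2^{-m})^n$. Any $x$ in this cell satisfies $x - w \in (-2^{-m}, 2^{-m})^n$, hence $\phi_m \geq h$ pointwise; upper semicontinuity of $h$, combined with the fact that the windows of suprema shrink to $\{x\}$, yields $\phi_m(x) \to h(x)$ for every $x$. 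The common envelope $H(x) := \sup_{v \in (-1,1)^n} h(x+v)$ is bounded with bounded support, hence integrable, so dominated convergence gives $\int \phi_m \to \int h$.

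Combining the two limits in the rescaled inequality valid for every $m$, and then removing the bounded-support reduction, yields $\int h \geq \Media{\beta}{\int_{\R^n} f\,\dlat x}{\int_{\R^n} g\,\dlat x}{\lambda}$, which is exactly the conclusion of Theorem~\ref{t:L_p-BBL}. The hard part will be the control of the left-hand side: the discrete sum carries both a $(-1,1)^n$-enlargement of $M_p$ and a sup-convolution $h^\symbol$, and one must verify that neither of these two thickenings introduces extra mass in the limit. Upper semicontinuity of $h$ is the minimal hypothesis ensuring the pointwise convergence $\phi_m \to h$ that closes the argument.
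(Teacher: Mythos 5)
Your proposal is correct in substance and follows the same overall strategy as the paper --- apply the discrete inequality on the refined lattices $2^{-m}\Z^n$ (equivalently, rescale and invoke the $\Z^n$ version) and pass to the limit --- but the two limiting steps are executed differently. The paper does not feed $f,g,h$ themselves into the discrete theorem: it first forms the cellwise upper envelopes $f_m(x)=\sup_{z\in x+[0,2^{-m})^n}f(z)$, etc., verifies that this new triple still satisfies the hypothesis \eqref{e:Lp-BBL_discreta_hipotesis}, and then recognizes $2^{-mn}\sum_x f_m(x)$ as an \emph{upper} Riemann sum; for the left-hand side it introduces $\overline{h}(x)=\sup_{\theta\in(-3\cdot2^{-m},3\cdot2^{-m})^n}h(x+\theta)$ and runs a layer-cake argument (compactness of the superlevel sets $C_r$ of the upper semicontinuous $h$, the identity $C_r=\bigcap_m\bigl(C_r+(-3\cdot 2^{-m},3\cdot 2^{-m})^n\bigr)$, Fubini and monotone convergence). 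You instead use the raw functions, so the right-hand side becomes a tagged Riemann sum (which converges under Riemann integrability just as well), and you control the left-hand side by pointwise convergence $\phi_m\to h$ from upper semicontinuity plus dominated convergence. Your route is somewhat leaner: it skips the verification that the envelopes inherit the hypothesis and replaces the layer-cake computation by a reverse Fatou argument. Both are valid.

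One point needs repair. You enlarge the index set of the left-hand sum to all of $2^{-m}\Z^n$ and invoke the dominating function $H(x)=\sup_{v\in(-1,1)^n}h(x+v)$, asserted to be ``bounded with bounded support''; but $h$ is only assumed integrable and upper semicontinuous, so it need not have bounded support, and $H$ need not be integrable (in dimension one take $h=\chi_{_F}$ with $F=\bigcup_{k\geq1}[k,k+2^{-k}]$: then $h$ is u.s.c.\ and integrable while $H\equiv1$ on a half-line). The fix is the same move you already make for $f$ and $g$: either keep the index set inside the fixed bounded window $M_p+(-1,1)^n$, where it already lives and on whose closure the u.s.c.\ function $h$ is bounded above, or truncate $h$ to $M_p$ at the outset --- legitimate because the hypothesis constrains $h$ only on $M_p$ and truncation only decreases $\int_{\R^n}h(x)\,\dlat x$. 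With that adjustment (and reading the pointwise claim $\phi_m\to h$ as $\limsup_{m\to\infty}\phi_m\leq h$, which is all that reverse Fatou requires at points where the support of $\phi_m$ fluctuates), the argument closes.
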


\section{Proofs and further consequences}\label{s:proofs}

To prove the $L_p$ version of the discrete Borell-Brascamp-Lieb
inequa\-lity~\eqref{e:Lp-BBL_discreta} we need to show the following auxiliary result
(here $\ceil{x}$ denotes the ceiling function of $x$, i.e., the least integer greater than or equal to $x$):
\begin{theorem}\label{t:BBL_discreta_ts}
Let $t,s>0$ and let $K,L\subset\R^n$ be
non-empty bounded sets. Let $-1/n\leq\alpha\leq\infty$, $\alpha\neq0$, and let $f,g,h:\R^n\longrightarrow
\R_{\geq 0}$ be non-negative functions such that
\[
h(tx+sy)\geq\bigl[tf(x)^{\alpha}+sg(y)^{\alpha}\bigr]^{1/\alpha}
\]
for all $x\in K$, $y \in L$ with $f(x)g(y)>0$. Then
\begin{equation*}\label{e:BBL_discreta_ts}
\sum_{z\in(M+(-1,\lceil t+s\rceil)^n)\cap\Z^n}h^\symbol(z)\geq
\Suma{\frac{\alpha}{n\alpha+1}}{\sum_{x\in K\cap\Z^n}f(x)}{\sum_{y\in L\cap\Z^n}g(y)}{t,s},
\end{equation*}
where $M=tK+sL$.
\end{theorem}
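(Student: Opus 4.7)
The strategy is to reduce Theorem~\ref{t:BBL_discreta_ts} to its convex-combination counterpart Theorem~\ref{t:BBL_discreta} by rescaling the weights $(t,s)$ to $(1-\lambda,\lambda)$. Set $r:=t+s$ and $\lambda:=s/r\in(0,1)$, so that $1-\lambda=t/r$. I would introduce the auxiliary functions
\[
\tilde f(x):=r^{1/\alpha}f(x),\qquad \tilde g(y):=r^{1/\alpha}g(y),\qquad \tilde h(z):=h(rz).
\]
A direct check confirms that $(\tilde f,\tilde g,\tilde h)$ satisfies the hypothesis of Theorem~\ref{t:BBL_discreta} with parameter $\lambda$: indeed $\tilde h((1-\lambda)x+\lambda y)=h(tx+sy)\geq[tf(x)^\alpha+sg(y)^\alpha]^{1/\alpha}=[(1-\lambda)\tilde f(x)^\alpha+\lambda\tilde g(y)^\alpha]^{1/\alpha}$. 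Applying Theorem~\ref{t:BBL_discreta} to $(\tilde f,\tilde g,\tilde h)$ on $K,L$ yields
\[
\sum_{z\in(M/r+(-1,1)^n)\cap\Z^n}\tilde h^\symbol(z)\;\geq\;r^{1/\alpha}\,\Media{\alpha/(n\alpha+1)}{A}{B}{\lambda},
\]
where $A:=\sum_{x\in K\cap\Z^n}f(x)$, $B:=\sum_{y\in L\cap\Z^n}g(y)$, and I used that $\tilde M=(1-\lambda)K+\lambda L=M/r$.

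The heart of the proof is then to translate this ``coarse-lattice'' inequality back to one involving the standard lattice $\Z^n$ and the unit-cube operator $h^\symbol$. Unfolding the definition of $\tilde h^\symbol$ shows that $\tilde h^\symbol(z)=\sup_{v\in(-r,r)^n}h(rz+v)$, i.e.\ a supremum of $h$ over a cube of side $2r$ attached to a coarse-lattice point $rz\in r\Z^n$. Such a cube can be tiled by $\lceil r\rceil^n$ integer translates of the unit cube $(-1,1)^n$, centered at integer points lying in $rz+\{-\lceil r\rceil+1,\ldots,\lceil r\rceil-1\}^n$, and each such tile contributes a value bounded by $h^\symbol(w)$ at the corresponding integer point $w$. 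A careful bookkeeping, matching each coarse-lattice point $rz$ to the $r^n$ nearby integer points of $M+(-1,\lceil r\rceil)^n$ and using the density $[\Z^n:r\Z^n]=r^n$, should then produce the factor $r^n$ needed to upgrade the exponent $1/\alpha$ to $1/\gamma=n+1/\alpha$. With $\gamma:=\alpha/(n\alpha+1)$, this yields
\[
\sum_{w\in(M+(-1,\lceil r\rceil)^n)\cap\Z^n}h^\symbol(w)\;\geq\;r^n\cdot r^{1/\alpha}\,\Media{\gamma}{A}{B}{\lambda}\;=\;r^{1/\gamma}\Media{\gamma}{A}{B}{\lambda}\;=\;\Suma{\gamma}{A}{B}{t,s},
\]
which is the desired conclusion.

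I expect the main technical obstacle to lie precisely in this last combinatorial conversion: one has to verify that the pairing between the coarse lattice $r\Z^n$ and the fine lattice $\Z^n$, combined with the covering of the cubes $rz+(-r,r)^n$ by unit cubes, genuinely accumulates a factor $r^n$ without being spoiled by boundary effects or overcounting. The asymmetric choice of cube $(-1,\lceil t+s\rceil)^n$ in the statement is exactly what this bookkeeping demands: it is the smallest axis-aligned cube whose integer points suffice to cover, through unit translates of $(-1,1)^n$, every cube $rz+(-r,r)^n$ reached by points $z$ of the coarse summation lattice, and its slightly enlarged upper extent $\lceil t+s\rceil$ accommodates the non-integral nature of $r$ by rounding up rather than down.
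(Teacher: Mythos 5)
Your reduction to Theorem~\ref{t:BBL_discreta} is fine up to the inequality
\[
\sum_{z\in(M/r+(-1,1)^n)\cap\Z^n}\tilde h^{\symbol}(z)\;\geq\; r^{1/\alpha}\,\Media{\gamma}{A}{B}{\lambda},\qquad \gamma:=\frac{\alpha}{n\alpha+1},
\]
but the step you yourself flag as the main obstacle, namely the conversion
\[
\sum_{w\in(M+(-1,\lceil r\rceil)^n)\cap\Z^n}h^{\symbol}(w)\;\geq\; r^n\sum_{z\in(M/r+(-1,1)^n)\cap\Z^n}\tilde h^{\symbol}(z),
\]
is false, and this is a genuine gap rather than a bookkeeping issue. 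Covering the cube $rz+(-r,r)^n$ by $\lceil r\rceil^n$ integer translates of $(-1,1)^n$ only shows that $\tilde h^{\symbol}(z)$ equals the \emph{maximum} of $h^{\symbol}(w)$ over the corresponding $w$; it does not produce $r^n$ distinct integer points each carrying the value $\tilde h^{\symbol}(z)$, since the supremum over the big cube may live in a single unit subcube while $h$ vanishes on all the others. Concretely, take $n=1$, $t=s=1$ (so $r=2$), $K=L=\{0\}$, hence $M=\{0\}$, and $h=\chi_{\{0\}}$: the right-hand side of the conversion is $2\,\tilde h^{\symbol}(0)=2\sup_{v\in(-2,2)}h(v)=2$, whereas the left-hand side is $h^{\symbol}(0)+h^{\symbol}(1)=1+0=1$. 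With $f=g=\chi_{\{0\}}$ and $\alpha=-1$ these data satisfy the hypothesis of the theorem, so the failure occurs inside an admissible instance; the final conclusion survives there only because your first inequality is far from tight. The underlying reason is that the discrete setting is not scale invariant: unlike $\vol(rM)=r^n\vol(M)$, which is exactly what makes this rescaling trick work for the continuous Borell--Brascamp--Lieb inequality, there is no inequality of the form $\G(rM+\,\cdot\,)\gtrsim r^n\,\G(M+\,\cdot\,)$, so the factor $r^n$ cannot be extracted by comparing the two lattice sums of $h$ pointwise (and for non-integer $r$ the set $r\Z^n$ is not even a sublattice of $\Z^n$, so the index $[\Z^n:r\Z^n]$ you invoke is not available).

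The paper takes a different route that sidesteps this entirely: it does not reduce to the $\lambda$-version, but reruns the proof of Theorem~\ref{t:BBL_discreta} with the weights $t,s$ in place of $1-\lambda,\lambda$ throughout, replacing the discrete Brunn--Minkowski inequality \eqref{e: BM_lattice_point_no_G(K)G(L)>0} by its weighted form \eqref{e:BM_tK+sL_(gen_dim)} from Theorem~\ref{t:BM_tK+sL_(gen_dim)}, applied to the relevant superlevel sets. The degree-$n$ homogeneity in $(t,s)$ that you are trying to recover combinatorially is precisely what \eqref{e:BM_tK+sL_(gen_dim)} supplies at the level of the lattice point enumerator (and it is also where the cube $(-1,\lceil t+s\rceil)^n$ genuinely originates). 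To repair your argument you would need to import that inequality and follow the superlevel-set proof directly rather than rescale.
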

To show this, we need the statement of the following Brunn-Minkowski type inequality for
the lattice point enumerator, proven in \cite{ILYN}:
\begin{thm}\label{t:BM_tK+sL_(gen_dim)}
Let $t,s\geq 0$ and
let $K,L\subset\R^n$ be bounded sets such that $\G(K)\G(L)>0$. Then
\begin{equation}\label{e:BM_tK+sL_(gen_dim)}
\G\Bigl(tK+sL+\bigl(-1,\lceil t+s\rceil\bigr)^n\Bigr)^{1/n}\geq
t\G(K)^{1/n}+s\G(L)^{1/n}.
\end{equation}
The inequality is sharp.
\end{thm}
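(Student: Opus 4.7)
The plan is to carry out, in the discrete setting, the classical derivation of Borell-Brascamp-Lieb from Brunn-Minkowski, with Theorem~\ref{t:BM_tK+sL_(gen_dim)} playing the role of the geometric input. The argument combines a layer-cake decomposition with a super-level-set inclusion. By the discrete Cavalieri identity
\[
\sum_{x\in A\cap\Z^n}\phi(x)=\int_0^\infty\G\bigl(A\cap\{\phi\geq r\}\bigr)\,\dlat r,
\]
valid for any non-negative $\phi$ and bounded $A\subset\R^n$, each of the three sums in the statement becomes an integral over $r>0$ of a $\G$-value of a super-level set. Denote $K_r=\{x\in K:f(x)\geq r\}$, $L_r=\{y\in L:g(y)\geq r\}$, set $N=M+(-1,\lceil t+s\rceil)^n$, and $H_\rho=\{z\in N:h^\symbol(z)\geq\rho\}$.

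The key geometric step is the super-level-set inclusion derived from the hypothesis. For any $r>0$ and $x\in K_r$, $y\in L_r$, one has $f(x)g(y)>0$, so the hypothesis together with the fact that $[ta^{\alpha}+sb^{\alpha}]^{1/\alpha}$ is non-decreasing in each of $a,b>0$ (a calculation valid for both $\alpha>0$ and $\alpha<0$, since $1/\alpha<0$ reverses the monotonicity of $a\mapsto a^\alpha$) gives $h(tx+sy)\geq(t+s)^{1/\alpha}r$. Because $h^\symbol$ is the $(-1,1)^n$-sup-convolution of $h$, and since $(-1,1)^n\subseteq(-1,\lceil t+s\rceil)^n$, this yields the inclusion
\[
tK_r+sL_r+(-1,\lceil t+s\rceil)^n\subseteq H_{(t+s)^{1/\alpha}r}.
\]
Theorem~\ref{t:BM_tK+sL_(gen_dim)} then applies to the pair $K_r,L_r$ at those levels for which $\G(K_r)\G(L_r)>0$, giving $\G(H_{(t+s)^{1/\alpha}r})^{1/n}\geq t\,\G(K_r)^{1/n}+s\,\G(L_r)^{1/n}$.

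Next, substituting $\rho=(t+s)^{1/\alpha}r$ in the layer-cake expression for the left-hand sum and inserting the BM-type estimate produces
\[
\sum_{z\in N\cap\Z^n}h^\symbol(z)\geq(t+s)^{1/\alpha}\int_0^\infty\bigl[t\,\G(K_r)^{1/n}+s\,\G(L_r)^{1/n}\bigr]^n\dlat r.
\]
The concluding step is a one-dimensional integral inequality bounding this last integral below by $\Suma{\alpha/(n\alpha+1)}{\int_0^\infty\G(K_r)\dlat r}{\int_0^\infty\G(L_r)\dlat r}{t,s}$, in which the two integrals coincide with $\sum_{K\cap\Z^n}f$ and $\sum_{L\cap\Z^n}g$ respectively. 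This final passage is the classical reparameterization argument from the proof of continuous BBL, where one views $r\mapsto\G(K_r)$ and $r\mapsto\G(L_r)$ as one-dimensional non-increasing densities and applies Minkowski's integral inequality together with a change of variables that depends on the sign of $\alpha$.

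I expect the main obstacle to be precisely this last one-dimensional reduction, together with the care required at levels $r$ where either $\G(K_r)$ or $\G(L_r)$ vanishes, since Theorem~\ref{t:BM_tK+sL_(gen_dim)} ceases to apply there. In the continuous proof one splits the $r$-integral at the thresholds where either density drops to zero, bounds the remaining piece via the trivial inclusion $sL_r\subseteq tK_r+sL_r$ (or its $K$-counterpart), and treats the sign of $\alpha$ case by case; the same device should transfer to the discrete framework essentially verbatim, since $\G(\cdot)$ is monotone with respect to set inclusion and behaves well under dilations by positive scalars after Minkowski-summing with the cube $(-1,\lceil t+s\rceil)^n$.
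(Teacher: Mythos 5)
Your proposal does not prove the statement it was assigned; it is circular with respect to it. The statement in question is the geometric lattice-point Brunn--Minkowski inequality \eqref{e:BM_tK+sL_(gen_dim)} itself, which the paper does not prove but quotes from \cite{ILYN}. Your argument explicitly invokes that very inequality (``Theorem~\ref{t:BM_tK+sL_(gen_dim)} then applies to the pair $K_r,L_r$'') as the geometric input, and what you actually sketch is the layer-cake derivation of the \emph{functional} result, Theorem~\ref{t:BBL_discreta_ts}, from the geometric one --- precisely the step the paper dispatches in one line after stating Theorem~\ref{t:BM_tK+sL_(gen_dim)}. A genuine proof of \eqref{e:BM_tK+sL_(gen_dim)} would have to be built from scratch: for instance, by reducing to the finite sets $K\cap\Z^n$, $L\cap\Z^n$, controlling the non-integer homothety factors $t,s$ by means of the enlarged cube $\bigl(-1,\lceil t+s\rceil\bigr)^n$, and ultimately relying on a purely combinatorial inequality in the spirit of \eqref{e:B-M_discrete_sum}. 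None of that appears in your proposal, and the sharpness claim in the statement (attained e.g.\ by suitable lattice cubes) is not addressed at all.

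Independently of the circularity, the one genuinely new step you do write down is flawed. The inclusion $tK_r+sL_r+\bigl(-1,\lceil t+s\rceil\bigr)^n\subseteq H_{(t+s)^{1/\alpha}r}$ fails whenever $\lceil t+s\rceil\geq 2$: a point $z'=v+w$ with $v\in tK_r+sL_r$ and $w\in\bigl(-1,\lceil t+s\rceil\bigr)^n$ need not admit any $u\in(-1,1)^n$ with $z'+u\in tK_r+sL_r$, because $-w$ generally lies outside the symmetric cube $(-1,1)^n$ over which the supremum defining $h^\symbol$ is taken, so there is no way to conclude $h^\symbol(z')\geq(t+s)^{1/\alpha}r$. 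Your justification --- ``since $(-1,1)^n\subseteq(-1,\lceil t+s\rceil)^n$'' --- enlarges the left-hand set, which is exactly the wrong direction. (This is also why the symmetric-cube argument of \cite{IYNZ} for Theorem~\ref{t:BBL_discreta} does not transfer ``essentially verbatim'' to the $t,s$ setting, and it is no accident that in the paper's own application, inside the proof of Theorem~\ref{t:Lp-BBL_discreta}, one always has $t+s\leq1$ by H\"older, so that $\lceil t+s\rceil=1$ and the cube is again the symmetric $(-1,1)^n$.) In short: the approach targets the wrong theorem, assumes the assigned one, and the bridging inclusion it introduces is false in the regime $t+s>1$ that the statement is meant to cover.
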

The proof of Theorem \ref{t:BBL_discreta_ts} now follows by using the same steps to those of the proof of Theorem~\ref{t:BBL_discreta}, just replacing convex combinations $(1-\lambda)x+\lambda y$, for $\lambda\in(0,1)$, by linear combinations $tx+sy$, with $t,s>0$, and applying \eqref{e:BM_tK+sL_(gen_dim)} instead of
\eqref{e: BM_lattice_point_no_G(K)G(L)>0}.

\medskip

Now we are ready to show our main result. We follow here the underlying idea of the original proof of \eqref{ineq:lpbm} given in \cite{LYZ}.
\begin{proof}[Proof of Theorem~\ref{t:Lp-BBL_discreta}]
Along the proof, we will assume that
\[\left(\sum_{x\in K\cap\Z^n}f(x)\right)\left(\sum_{y\in L\cap\Z^n}g(y)\right)>0,\]
since the result is trivial otherwise.
Now we set, for any given $\mu_0\in[0,1]$ (to be suitably chosen later),
\[
t=t(\mu_0):=(1-\lambda)^{1/p}(1-\mu_0)^{1/q}\quad\text{ and }\quad
s=s(\mu_0):=\lambda^{1/p}\mu_0^{1/q},
\]
for which one has, by H\"older's inequality, that $t+s\leq1$.
Notice that the assumption \eqref{e:Lp-BBL_discreta_hipotesis}
can be then rewritten, in terms
of $t,s$, as
\[
h(tx+sy)\geq\bigl[tf(x)^{\alpha}+sg(y)^{\alpha}\bigr]^{1/\alpha}
\]
for all $x\in K$ and $y\in L$ with $f(x)g(y)>0$, and thus
Theorem~\ref{t:BBL_discreta_ts} yields
\begin{equation}\label{e:applying_BBL_discreta_ts}
\sum_{z\in(tK+sL+(-1,\lceil t+s\rceil)^n)\cap\Z^n}h^\symbol(z)
\geq \Suma{\frac{\alpha}{n\alpha+1}}{\sum_{x\in K\cap\Z^n}f(x)}{\sum_{y\in L\cap\Z^n}g(y)}{t,s}.
\end{equation}
Moreover, from \eqref{e:p-sum_extended} we clearly have
\begin{equation*}\label{e:inclusion_K+pL_mu}
M_p =(1-\lambda)\cdot K+_p\lambda\cdot L
     \supset (1-\lambda)^{1/p}(1-\mu_0)^{1/q} K + \lambda^{1/p}\mu_0^{1/q}L = tK+sL.
\end{equation*}
This, together with \eqref{e:applying_BBL_discreta_ts} and the fact that $(-1,1)^n\supset\bigl(-1,\lceil t+s\rceil\bigr)^n$, allows us to conclude that
\[
\sum_{z\in(M_p+(-1,1)^n)\cap\Z^n}h^\symbol(z)
\geq \Suma{\frac{\alpha}{n\alpha+1}}{\sum_{x\in K\cap\Z^n}f(x)}{\sum_{y\in L\cap\Z^n}g(y)}{t,s}.
\]
Notice also that if $\alpha=-1/n$ then $\alpha/(n\alpha+1)=-\infty$ and hence we are done. Then, in the following we may assume that $\alpha\neq0,-1/n$ (cf. Remark~\ref{r:alpha=0}) and thus, defining $\beta:=\alpha/(n\alpha+1)\in(-\infty,0)\cup(0,1/n]$, we must check whether
\begin{equation}\label{e:suma=media}
\Suma{\beta}{\sum_{x\in K\cap\Z^n}f(x)}{\sum_{y\in L\cap\Z^n}g(y)}{t,s}\geq
\Media{p\beta}{\sum_{x\in K\cap\Z^n}f(x)}{\sum_{y\in L\cap\Z^n}g(y)}{\lambda}
\end{equation}
for a suitable value of $\mu_0\in[0,1]$. To this aim, it is enough to take
\[
\mu_0:=\frac{\lambda\left(\sum_{y\in
L\cap\Z^n}g(y)\right)^{p\beta}}{(1-\lambda)\left(\sum_{\vphantom{y}x\in
K\cap\Z^n}f(x)\right)^{p\beta}+\lambda\left(\sum_{y\in
L\cap\Z^n}g(y)\right)^{p\beta}},
\]
and a straightforward computation shows that \eqref{e:suma=media} indeed holds (in fact, with equality).
This concludes the proof.
\end{proof}

\begin{remark}
Following the same approach as the one in the proof of Theorem~\ref{t:Lp-BBL_discreta}, just replacing the sums of the functions $f$, $g$ and $h$ by their integrals on $\R^n$, one may also derive Theorem~\ref{t:L_p-BBL}. For, one can similarly exploit the suitable version of Theorem \ref{e:BBL_means} for linear combinations $tx+sy$ instead of the one for means $(1-\lambda)x+\lambda y$ (see \cite{Borell}).
\end{remark}

\smallskip

An analogous result for arbitrary lattices can be obtained. We recall that an $n$-dimensional lattice $\Lambda\subset\R^n$ is the set of all integer combinations of $n$ linearly independent vectors $v_1,\dots,v_n$, the set $\mathcal{B}=\{v_1,\dots,v_n\}$ being called a basis of $\Lambda$. Thus, for such an
$n$-dimensional lattice $\Lambda$, let
$\varphi:\R^n\longrightarrow\R^n$ be the linear (bijective) map defined by $\varphi(x)=\sum_{i = 1}^n x_i v_i$ for each $x=(x_1,\dots,x_n)\in\R^n$.
Taking into account the pointwise definition of the $p$-sum given in \eqref{e:p-sum_extended}, we clearly have
\[\varphi\bigl((1-\lambda)\cdot\varphi^{-1}(K)+_p\lambda\cdot\varphi^{-1}(L)\bigr)=(1-\lambda)\cdot K+_p\lambda\cdot L.\]
This allows us to extend the statement of Theorem \ref{t:Lp-BBL_discreta} to the setting of an $n$-dimensional lattice $\Lambda\subset\R^n$, by considering the auxiliary functions
$f_{\mathcal{B}}, g_{\mathcal{B}}, h_{\mathcal{B}}: \R^n \longrightarrow \R_{\geq0}$ given by
\[ f_{\mathcal{B}}(x) = f\bigl(\varphi(x)\bigr), \quad g_{\mathcal{B}}(x) = g\bigl(\varphi(x)\bigr)\,\text{ and } \,\, h_{\mathcal{B}}(x) = h\bigl(\varphi(x)\bigr)\] for any $x\in \R^n$, as follows:
\begin{corollary}\label{c:Lp-BBL_disc_lattice}
Let $\lambda\in(0,1)$ and $p\geq 1$, and let $K,L\subset\R^n$ be
non-empty bounded sets. Let $-1/n\leq\alpha\leq\infty$ and let $f,g,h:\R^n\longrightarrow
\R_{\geq 0}$ be non-negative functions such that
\begin{equation*}
\begin{split}
h\Bigl((1-\lambda)^{1/p}(1-\mu)^{1/q}x & +\lambda^{1/p}\mu^{1/q}y\Bigr)\\
 & \geq\Bigl[(1-\lambda)^{1/p}(1-\mu)^{1/q}f(x)^{\alpha}+\lambda^{1/p}\mu^{1/q}g(y)^{\alpha}\Bigr]^{1/\alpha}
\end{split}
\end{equation*}
for all $x\in K$, $y\in L$ with $f(x)g(y)>0$ and all $\mu\in[0,1]$. Let $\Lambda\subset\R^n$ be an $n$-dimensional lattice with basis $\mathcal{B}=\{v_1,\dots,v_n\}$ and let $\varphi(x)=\sum_{i = 1}^n x_i v_i$ for $x\in\R^n$. Then
\begin{equation*}
\sum_{z\in(M_p+\varphi((-1,1)^n))\cap\Lambda}\!\!\!\!\! h^{\symbol_{_\mathcal{B}}}(z)\geq\!
\Media{\frac{p\alpha}{n\alpha+1}}{\sum_{x\in
K\cap\Lambda}\!\!\!f(x)}{\sum_{y\in L\cap\Lambda}\!\!\!g(y)}{\lambda},
\end{equation*}
where $M_p=(1-\lambda)\cdot K+_p\lambda\cdot L$ and $h^{\symbol_{_\mathcal{B}}}(z) = \sup_{u\in \varphi((-1,1)^n)} h(z+u)$ for all $z\in\R^n$.
\end{corollary}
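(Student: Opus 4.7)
The plan is to reduce Corollary~\ref{c:Lp-BBL_disc_lattice} to Theorem~\ref{t:Lp-BBL_discreta} by transporting every ingredient through the linear isomorphism $\varphi$, which restricts to a bijection $\Z^n \to \Lambda$. Set $K':=\varphi^{-1}(K)$, $L':=\varphi^{-1}(L)$, and let $f_\mathcal{B},g_\mathcal{B},h_\mathcal{B}$ be the pull-back functions defined in the statement. Both $K'$ and $L'$ are non-empty and bounded (since $\varphi$ is a linear bijection), and the observation given in the paragraph preceding the corollary yields
$\varphi^{-1}(M_p) = (1-\lambda)\cdot K' +_p \lambda\cdot L' =: M'_p$.

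First I would check that $(f_\mathcal{B},g_\mathcal{B},h_\mathcal{B})$ on $(K',L')$ satisfies the hypothesis \eqref{e:Lp-BBL_discreta_hipotesis}. For $x\in K'$, $y\in L'$ with $f_\mathcal{B}(x)g_\mathcal{B}(y)>0$ (equivalently, $f(\varphi(x))g(\varphi(y))>0$) and any $\mu\in[0,1]$, applying $\varphi$ to the convex combination $(1-\lambda)^{1/p}(1-\mu)^{1/q}x+\lambda^{1/p}\mu^{1/q}y$ converts the required inequality on $h_\mathcal{B}$ into the assumed inequality on $h$ evaluated at the pair $(\varphi(x),\varphi(y))\in K\times L$. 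Invoking Theorem~\ref{t:Lp-BBL_discreta} for the triple on $(K',L')$ then gives
\[
\sum_{z\in(M'_p+(-1,1)^n)\cap\Z^n}\!\!\!\!\! h_\mathcal{B}^{\symbol}(z)\geq\Media{\frac{p\alpha}{n\alpha+1}}{\sum_{x\in K'\cap\Z^n}\!\!\!f_\mathcal{B}(x)}{\sum_{y\in L'\cap\Z^n}\!\!\!g_\mathcal{B}(y)}{\lambda}.
\]

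Next I would push everything forward through $\varphi$. A one-line computation gives, for every $x\in\R^n$,
$h_\mathcal{B}^{\symbol}(x)=\sup_{u\in(-1,1)^n}h(\varphi(x)+\varphi(u))=h^{\symbol_{_\mathcal{B}}}(\varphi(x))$,
so the bijection $z\mapsto\varphi(z)$ turns the left-hand sum above into a sum of $h^{\symbol_{_\mathcal{B}}}$ over $\varphi\bigl((M'_p+(-1,1)^n)\cap\Z^n\bigr)=(M_p+\varphi((-1,1)^n))\cap\Lambda$, by linearity of $\varphi$. The same substitution on the right-hand side replaces $\sum_{K'\cap\Z^n}f_\mathcal{B}$ and $\sum_{L'\cap\Z^n}g_\mathcal{B}$ by $\sum_{K\cap\Lambda}f$ and $\sum_{L\cap\Lambda}g$, respectively, which yields the claimed inequality.

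No genuine obstacle arises; the proof is a routine transport-of-structure argument that relies only on the linearity of $\varphi$ (which makes $\varphi$ commute with $p$-sums, Minkowski sums, and translations of $(-1,1)^n$) together with $\varphi(\Z^n)=\Lambda$. The one point meriting care is matching the two variants of the Asplund extension, namely $h_\mathcal{B}^{\symbol}$ (supremum, on the pulled-back function, over $(-1,1)^n$) and $h^{\symbol_{_\mathcal{B}}}$ (supremum of $h$ over $\varphi((-1,1)^n)$), and this is precisely what forces the cube $(-1,1)^n$ to be replaced by $\varphi((-1,1)^n)$ in the lattice formulation.
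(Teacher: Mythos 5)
Your proposal is correct and follows essentially the same route the paper takes (the paper only sketches this argument in the paragraph preceding the corollary): pull back $K$, $L$, $M_p$ and the functions through the linear bijection $\varphi$, verify the hypothesis for the pulled-back data, apply Theorem~\ref{t:Lp-BBL_discreta}, and push the resulting inequality forward using $\varphi(\Z^n)=\Lambda$ and the identity $h_{\mathcal{B}}^{\symbol}(x)=h^{\symbol_{_\mathcal{B}}}\bigl(\varphi(x)\bigr)$. The details you supply, in particular the matching of the two Asplund-type extensions, are exactly the ones the paper leaves implicit.
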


\smallskip

\subsection{Geometric consequences}
Notice that, as in the classical setting, the geometric inequality \eqref{ineq:dlpbm} can be derived
from the functional one \eqref{e:Lp-BBL_discreta}:
\begin{proof}[Proof of Theorem~\ref{t:p-B-M_discrete}]
By applying \eqref{e:Lp-BBL_discreta} with $\alpha=\infty$ to the characteristic functions $f=\chi_{_K}$, $g=\chi_{_L}$ and $h=\chi_{_{(1-\lambda)\cdot K +_p \lambda\cdot L}}$, for which $h^\symbol=\chi_{_{(1-\lambda)\cdot K+_p\lambda\cdot L+(-1,1)^n}}$, one immediately gets \eqref{ineq:dlpbm}.

\smallskip

Finally, to show that the equality can be attained, it is enough to consider $K = L = [0,m]^n$ with $m \in \N$,
for which $\G\bigl((1-\lambda)\cdot K +_p \lambda L+(-1,1)^n\bigr)=(m+1)^n=\G(K)=\G(L)$.
\end{proof}

For bounded sets $K, L \subset \R^n$ with $\G(K)\G(L)>0$, it was shown in \cite{IYNZ} that
\begin{equation*}\label{e: BM_lattice_point_1/2}
\G\left(\frac{K + L}{2} + [0,1]^n\right)^{1/n}\geq \frac{\G(K)^{1/n}+\G(L)^{1/n}}{2},
\end{equation*}
i.e., that \eqref{e: BM_lattice_point_no_G(K)G(L)>0} for $\lambda=1/2$ also holds by replacing the cube $(-1,1)^n$
by $[0,1]^n$. However, the latter inequality is in general not true for any $\lambda\in(0,1)$.
Thus, and regarding \eqref{ineq:dlpbm}, it is a natural question whether $(-1,1)^n$ might be reduced to a smaller cube.
\begin{remark}\label{r:(-1,1)^n_cannot_be_reduced}
We notice on the one hand that the set $(-1,1)^n$ cannot be reduced to a strictly smaller cube of the form $(-1,a]^n$ (or $[-a,1)^n$) with $a\in(0,1)$, for any fixed value of $p\geq1$.
Indeed, it is enough to consider, as an example, the sets $K=[0,1]$, $L=[0,2]$ in dimension $n=1$
and the combination
\[
M=(1-\lambda)\cdot K+_p\lambda\cdot L+(-1,a]
=\left(-1,\mathcal{M}_p^\lambda(1,2)+a\right]
\]
(observe that $K,L\subset\R^n$ are $n$-dimensional convex bodies containing the origin and hence, as mentioned in the introduction, the $p$-sum defined by \eqref{e:p-sum_extended} agrees with the classical definition given by \eqref{e:def_p-sum}).
Then, since
$\Gsub{1}(M)=\left\lfloor\Media{p}{1}{2}{\lambda}+a\right\rfloor+1$
and $\Media{p}{1}{2}{\lambda}\in[1,2]$, where $\floor{x}$ denotes the floor function of
the real number $x$ (i.e., the greatest integer less than or equal to $x$),
it is enough to find
$\lambda>0$ such that $\Media{p}{1}{2}{\lambda}+a<2$. But this is always
possible because
\[\lim_{\lambda\to0^+} \Media{p}{1}{2}{\lambda}=1,\]
and therefore we have $\Gsub{1}(M)=2$.
However, for the right-hand side of \eqref{ineq:dlpbm} we have $\Gsub{1}(K)=2$ and
$\Gsub{1}(L)=3$, and thus
\[
\Media{p}{\vphantom{\bigl(}\Gsub{1}(K)}{\Gsub{1}(L)}{\lambda}=\Media{p}{2}{3}{\lambda}\in[2,3].
\]
Since $\lambda>0$, we know that
$\Media{p}{\vphantom{\bigl(}\Gsub{1}(K)}{\Gsub{1}(L)}{\lambda}>2$,
which shows that
\[
\Gsub{1}\bigl((1-\lambda)\cdot K+_p\lambda\cdot
L+(-1,a]\bigr)<\Media{p}{\vphantom{\bigl(}\Gsub{1}(K)}{\Gsub{1}(L)}{\lambda}.
\]

On the other hand, taking a look at \eqref{e: BM_lattice_point_no_G(K)G(L)>0},
one could think that its natural $L_p$ version could be given by considering the $p$-sum of the cube $(-1,1)^n$ on
the left-hand side of \eqref{ineq:dlpbm} (instead of its Minkowski addition). In fact, when dealing with $n$-dimensional convex bodies $K,L\subset\R^n$ containing the origin, one has that
\[(1-\lambda)\cdot K+_p\lambda\cdot L+_p[-1,1]^n\subset (1-\lambda)\cdot K+_p\lambda\cdot L+[-1,1]^n\]
for any $p\geq1$ (see \cite{F62}). So, $p$-summing the cube $(-1,1)^n$ on the left-hand side of \eqref{ineq:dlpbm} would be, sometimes, tighter than (Minkowski) adding it.
Ne\-vertheless, this is not possible either. Indeed, by considering again
the sets $K=[0,1]$, $L=[0,2]$ in dimension $n=1$ and $p=2$,
for which we then have by \eqref{e:def_p-sum} that
\[\frac{1}{2}\cdot K +_2 \frac{1}{2}\cdot L=\bigl[0,\sqrt{2.5}\,\bigr],\]
we get, now using \eqref{e:p-sum_extended},
\begin{equation*}
\begin{split}
\Gsub{1}\left(\frac{1}{2}\cdot K +_2 \frac{1}{2}\cdot L+_2(-1,1)\right)&
\leq\Gsub{1}\Bigl(\bigl(-1,\sqrt{3.5}\,\bigr)\Bigr)=2<\sqrt{6.5}\\
&=\Media{2}{2}{3}{1/2}
=\Media{2}{\vphantom{\bigl(}\Gsub{1}(K)}{\Gsub{1}(L)}{1/2}.
\end{split}
\end{equation*}
\end{remark}

We observe now that Theorem \ref{t:p-B-M_discrete} holds also true for arbitrary non-negative
($L_p$) linear combinations of $K$ and $L$, but with the suitable
modification of the cube. More precisely, we have:
\begin{corollary}\label{c:disc_LpBM_t_s}
Let $t,s\geq0$ and $p\geq 1$, and let $K,L\subset\R^n$ be bounded sets such that
$\G(K)\G(L)>0$. Then
\[
\G\biggl(t\cdot K+_p s\cdot
L+\Bigl(-1,\bigl\lceil(t+s)^{1/p}\bigr\rceil\Bigr)^n\biggr)^{p/n}\geq
t\G(K)^{p/n}+s\G(L)^{p/n}.
\]
\end{corollary}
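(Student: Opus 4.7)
I would derive this corollary by essentially repeating the proof of Theorem \ref{t:p-B-M_discrete} from the previous section, but starting from the asymmetric discrete Brunn-Minkowski type inequality of Theorem \ref{t:BM_tK+sL_(gen_dim)} rather than its convex combination counterpart. Equivalently, one applies Theorem \ref{t:BBL_discreta_ts} with $\alpha=\infty$ to the characteristic functions $f=\chi_{_K}$, $g=\chi_{_L}$, $h=\chi_{_{\tau K+\sigma L}}$, where $\tau,\sigma$ are auxiliary linear coefficients depending on an optimization parameter $\mu_0\in[0,1]$.

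More concretely, I would set
\[
\tau:=t^{1/p}(1-\mu_0)^{1/q},\qquad \sigma:=s^{1/p}\mu_0^{1/q},
\]
and first observe two key facts. First, by H\"older's inequality,
\[
\tau+\sigma=t^{1/p}(1-\mu_0)^{1/q}+s^{1/p}\mu_0^{1/q}\leq (t+s)^{1/p},
\]
so $\bigl(-1,\lceil\tau+\sigma\rceil\bigr)^n\subset\bigl(-1,\lceil(t+s)^{1/p}\rceil\bigr)^n$. Second, by the pointwise definition \eqref{e:p-sum_extended} of the $p$-sum, one has the inclusion $\tau K+\sigma L\subset t\cdot K+_p s\cdot L$. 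Applying Theorem \ref{t:BBL_discreta_ts} (with $\alpha=\infty$) yields, after identifying $h^\symbol$ with the characteristic function of $\tau K+\sigma L+(-1,1)^n$, the asymmetric Brunn-Minkowski type estimate
\[
\G\bigl(\tau K+\sigma L+(-1,\lceil\tau+\sigma\rceil)^n\bigr)^{1/n}\geq \tau\,\G(K)^{1/n}+\sigma\,\G(L)^{1/n}.
\]
Combining this with the two inclusions above gives
\[
\G\Bigl(t\cdot K+_p s\cdot L+\bigl(-1,\lceil(t+s)^{1/p}\rceil\bigr)^n\Bigr)^{1/n}\geq t^{1/p}(1-\mu_0)^{1/q}\G(K)^{1/n}+s^{1/p}\mu_0^{1/q}\G(L)^{1/n}
\]
for every $\mu_0\in[0,1]$.

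The final step is to maximize the right-hand side over $\mu_0$, which is the only place a small calculation is needed; it is exactly the optimization already carried out (in slightly different notation) in the proof of Theorem \ref{t:Lp-BBL_discreta}. Choosing, as there,
\[
\mu_0=\frac{s\,\G(L)^{p/n}}{t\,\G(K)^{p/n}+s\,\G(L)^{p/n}}
\]
produces the equality case of H\"older's inequality and turns the bound into $\bigl(t\,\G(K)^{p/n}+s\,\G(L)^{p/n}\bigr)^{1/p}$. Raising to the $p$-th power delivers the claim. The cases $ts=0$ or $t+s=0$ are trivial, while the case $p=1$ (so $q=\infty$) reduces, after interpreting $(1-\mu_0)^{1/q}=\mu_0^{1/q}=1$, to a direct invocation of Theorem \ref{t:BM_tK+sL_(gen_dim)}. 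I do not anticipate any real obstacle: the only subtlety is the H\"older bookkeeping needed to see that the linear summand $\tau K+\sigma L$ is engulfed by the $p$-sum while the auxiliary cube $(-1,\lceil\tau+\sigma\rceil)^n$ is engulfed by $(-1,\lceil(t+s)^{1/p}\rceil)^n$, so that everything fits into a single lattice-point count.
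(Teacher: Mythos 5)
Your proposal is correct and follows essentially the same route as the paper: the paper also reruns the argument of Theorem~\ref{t:Lp-BBL_discreta} with $(1-\lambda),\lambda$ replaced by $t,s$ for the characteristic functions $\chi_{_K},\chi_{_L},\chi_{_{t\cdot K+_p s\cdot L}}$, introducing the same auxiliary coefficients $t^{1/p}(1-\mu_0)^{1/q}$ and $s^{1/p}\mu_0^{1/q}$, the same H\"older bound $\bar t+\bar s\le(t+s)^{1/p}$ giving the cube inclusion, and the same optimal choice of $\mu_0$. Your explicit verification that the optimization yields $\bigl(t\G(K)^{p/n}+s\G(L)^{p/n}\bigr)^{1/p}$ matches the computation the paper delegates to the proof of Theorem~\ref{t:Lp-BBL_discreta}.
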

\begin{proof}
The proof follows the same argument to that of Theorem~\ref{t:Lp-BBL_discreta}, by replacing $(1-\lambda)$ and $\lambda$ by $t$ and $s$, respectively, for the characteristic functions $f=\chi_{_K}$, $g=\chi_{_L}$ and
$h=\chi_{_{t\cdot K +_p s\cdot L}}$.
So, in this case, it is enough to set
\[
\bar{t}=\bar{t}(\mu_0):=t^{1/p}(1-\mu_0)^{1/q}\quad\text{ and }\quad
\bar{s}=\bar{s}(\mu_0):=s^{1/p}\mu_0^{1/q},
\]
for which $\bar{t}+\bar{s}\leq(t+s)^{1/p}$ by H\"older's inequality, and then \[\Bigl(-1,\bigl\lceil(t+s)^{1/p}\bigr\rceil\Bigr)^n
\supset\Bigl(-1,\bigl\lceil \bar{t}+\bar{s}\bigr\rceil\Bigr)^n.\]
The proof is now concluded as in the one of Theorem~\ref{t:Lp-BBL_discreta}.
\end{proof}

In \cite{IYNZ} it was shown that if $A,B\subset\Z^n$ are finite, $A,B\neq\emptyset$, then
\begin{equation}\label{e:B-M_discrete_sum}
\bigl|A+B+\{0,1\}^n\bigr|^{1/n}\geq |A|^{1/n}+|B|^{1/n}.
\end{equation}
Here it makes no sense to wonder about an $L_p$ version of the above inequality, by just replacing $A+B$ by $A+_pB$ on the left-hand side, since $A+_pB$ is no longer finite (see \eqref{e:p-sum_extended}), for $p>1$. However, from  Corollary~\ref{c:disc_LpBM_t_s} for $K=A$, $L=B$ and $t=s=1$ we get the following result:

\begin{corollary}\label{t:Lp_B-M_discrete_cardinality}
Let $A,B\subset\Z^n$ be finite, $A,B\neq\emptyset$. Then
\begin{equation}\label{e:Lp_B-M_discrete_cardinality}
\G\bigl(A+_p B+(-1,2)^n\bigr)^{p/n}
\geq |A|^{p/n}+|B|^{p/n}.
\end{equation}
\end{corollary}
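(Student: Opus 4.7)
The plan is to derive this corollary directly from Corollary~\ref{c:disc_LpBM_t_s} by specializing to $K=A$, $L=B$ and $t=s=1$. Since $A,B\subset\Z^n$ are non-empty finite sets, the hypothesis $\G(K)\G(L)>0$ of that corollary is automatically satisfied, and moreover $\G(A)=|A|$, $\G(B)=|B|$, so the right-hand side of the inequality of Corollary~\ref{c:disc_LpBM_t_s} in this case becomes exactly $|A|^{p/n}+|B|^{p/n}$.

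For the left-hand side, I would first observe that the $p$-scalar multiplication is given by $\lambda\cdot M=\lambda^{1/p}M$, so in particular $1\cdot A=A$ and $1\cdot B=B$, and the $p$-sum appearing becomes simply $A+_pB$. The only point to verify is that the auxiliary cube $\bigl(-1,\lceil(t+s)^{1/p}\rceil\bigr)^n$ reduces to $(-1,2)^n$ under the chosen parameters, i.e.\ that $\bigl\lceil 2^{1/p}\bigr\rceil=2$ for every $p\geq 1$. This is immediate since $1<2^{1/p}\leq 2$ whenever $p\geq 1$. Substituting these identifications into the statement of Corollary~\ref{c:disc_LpBM_t_s} yields precisely~\eqref{e:Lp_B-M_discrete_cardinality}.

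There is no genuine obstacle here: the corollary is essentially a re-labelling of the special case $t=s=1$ of the previously established result. The only conceptual point worth highlighting (already signalled in the paragraph preceding the statement) is that, although $A+_pB$ is an uncountable subset of $\R^n$ when $p>1$, so that $|A+_pB|$ is no longer a meaningful quantity in this range, the lattice point enumerator $\G(\cdot)$ makes perfect sense on arbitrary bounded subsets of $\R^n$, which is exactly what permits the $L_p$ analogue~\eqref{e:Lp_B-M_discrete_cardinality} of~\eqref{e:B-M_discrete_sum} to be formulated and proved.
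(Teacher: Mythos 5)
Your proposal is correct and matches the paper's own derivation: the paper obtains this corollary exactly by applying Corollary~\ref{c:disc_LpBM_t_s} with $K=A$, $L=B$ and $t=s=1$, and your verification that $1\cdot A=A$, $\G(A)=|A|$ and $\bigl\lceil 2^{1/p}\bigr\rceil=2$ for all $p\geq1$ supplies precisely the routine identifications the paper leaves implicit.
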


Clearly, for $p=1$, the latter inequality is exactly \eqref{e:B-M_discrete_sum}, since $A+B\subset\Z^n$ and the sole integer points in $(-1,2)^n$ are those in $\{0,1\}^n$.

\smallskip

We would like to note that unlike in the linear case ($p=1$), the cube
$(-1,2)^n$ on the left-hand side of \eqref{e:Lp_B-M_discrete_cardinality} cannot be,
in general, reduced to $\{0,1\}^n$ or even to $[0,1]^n$.

To see this, it is enough to consider $n=1$, $A=\{0,\dots,a\}$ and
$B=\{0,\dots,b\}$ for some $a,b\in\N$ with $0<a\leq b$. Indeed, on the one hand,
taking into account that $\Suma{p}{\cdot}{\cdot}{}$ is decreasing in $p$, we have
(see e.g. \cite[Theorem~19]{HaLiPo})
\[\Suma{p}{\vphantom{\bigl(}|A|}{|B|}{}=\Suma{p}{a+1}{b+1}{}\in[b+1,a+b+2],\]
and further $\Suma{p}{a+1}{b+1}{}>a+b+1$ for $p>1$ small enough.
On the other hand, if we denote by $K=[0,a]$ and $L=[0,b]$, then
$K+_pL=\bigl[0,\Suma{p}{a}{b}{}\bigr]$ since $K$ and $L$ are $1$-dimensional convex
bodies containing the origin (and thus their $p$-sum is also given by \eqref{e:def_p-sum}).
Moreover, due to the fact that $\Suma{p}{a}{b}{}<a+b$ for
all $p>1$, we obtain $\bigl\lfloor\Suma{p}{a}{b}{}\bigr\rfloor+1\leq a+b$.
Therefore, altogether we get
\begin{equation*}
\begin{split}
\Gsub1\bigl(A+_pB+[0,1]\bigr)&\leq\Gsub1\bigl(K+_pL+[0,1]\bigr)=\bigl\lfloor\Suma{p}{a}{b}{}\bigr\rfloor +2\\
&\leq a+b+1<\Suma{p}{a+1}{b+1}{}=\Suma{p}{\vphantom{\bigl(}|A|}{|B|}{}
\end{split}
\end{equation*}
for any $p>1$ small enough.
In fact, taking for instance $a=b=1$ and $p=3/2$, the latter inequality holds,
which shows that $[0,1]^n$ cannot replace $(-1,2)^n$ on the left-hand side of \eqref{e:Lp_B-M_discrete_cardinality}.

\smallskip
\subsection{From the discrete to the continuous case}
We will now prove Theorem~\ref{t:bblp_disc_to_cont}, i.e., we show that the discrete inequa\-lity collected in Theorem~\ref{t:Lp-BBL_discreta} implies the continuous result established in Theorem~\ref{t:L_p-BBL}, in the spirit of what happens for $p=1$ (see \cite[Theorem~2.4]{IYNZ}).

\begin{proof}[Proof of Theorem~\ref{t:bblp_disc_to_cont}]
Let $f,g,h:\R^n\longrightarrow\R_{\geq0}$
be functions in the conditions of Theorem \ref{t:L_p-BBL}, namely,
verifying \eqref{e:L_p-BBL_hyp}
for all $x,y\in\R^n$ with $f(x)g(y)>0$ and all $\mu\in[0,1]$, for some fixed $p\geq1$, $\lambda\in(0,1)$ and $-1/n\leq\alpha\leq\infty$.

\smallskip

We will first prove that, given $k\in\N$ and $C=[-k,k]^n$, we have
\begin{equation}\label{e:bblpdtoc_goal}
  \int_{C}h(z)dz\geq\Media{\frac{p\alpha}{n\alpha+1}}{\int_{C}f(x)\,\dlat x}{\int_{C}g(x)\,\dlat x}{\lambda}.
\end{equation}
Theorem~\ref{t:L_p-BBL} will then follow simply by taking limits as $k\to\infty$. To this aim, we may assume that the functions $f$, $g$ and $h$ vanish outside $C$ (multiplying them by the characteristic functions of $C$, if necessary). We shall also write $C_0=[-k,k)^n$.

For each $m\in\N$,
let $\Om=(-2^{-m},2^{-m})$ and $\Rm=[0,2^{-m})$, and
define the functions $f_m, g_m, h_m:\R^n\longrightarrow\R_{\geq0}$ given by
\[f_m(x)=\sup_{z\in x+\Rm}f(z), \,\text{ }\, g_m(x)=\sup_{z\in x+\Rm}g(z) \,\text{ and }\, h_m(x)=\sup_{z\in x+\Rm}h(z).\]
Moreover, for the sake of simplicity, we set $t:=(1-\lambda)^{1/p}(1-\mu)^{1/q}$ and $s:=\lambda^{1/p}\mu^{1/q}$ for any given $\mu\in[0,1]$, for which we get, as a consequence of H\"older's inequality, that $t+s\leq1$.
Again, condition \eqref{e:L_p-BBL_hyp} can be rewritten in terms of $t$, $s$ as
\[h(tz_1+sz_2)\geq\bigl[tf(z_1)^\alpha+sg(z_2)^\alpha\bigr]^{1/\alpha}\]
for all $z_1,z_2\in\R^n$ with $f(z_1)g(z_2)>0$.
Thus, since $(t+s)\Rm\subset\Rm$, we have
\begin{equation*}
\begin{split}
  h_m(tx+sy)&=\sup_{z\in tx+sy+\Rm}h(z)\geq\sup_{z\in t(x+\Rm)+s(y+\Rm)}h(z)\\
  &=\sup_{z_1\in x+\Rm,z_2\in y+\Rm}h(tz_1+sz_2)\\
  &\geq\sup_{z_1\in x+\Rm,z_2\in y+\Rm}\bigl[tf(z_1)^\alpha+sg(z_2)^\alpha\bigr]^{1/\alpha}\\
  &=\left[t\left(\sup_{z_1\in x+\Rm}f(z_1)\right)^\alpha+
  s\left(\sup_{z_2\in y+\Rm}g(z_2)\right)^\alpha\right]^{1/\alpha}\\
  &=\bigl[tf_m(x)^\alpha+sg_m(y)^\alpha\bigr]^{1/\alpha}
\end{split}
\end{equation*}
for all $x,y\in C$ (and so, in particular, for all $x,y\in C_0$) with $f_m(x)g_m(y)>0$ and all $\mu\in[0,1]$. Hence,
the functions $f_m,g_m,h_m$ are in the conditions of Corollary~\ref{c:Lp-BBL_disc_lattice} and we may apply it for the sets $K=L=C_0$ and the lattice $\2Z$. Note that in this case $\varphi\bigl((-1,1)^n\bigr)=\Om$ and thus we obtain
\begin{equation}\label{eq:bblpdtoc_thk_1}
\sum_{z\in[M_p+\Om]\cap\2Z}h_m^{\symbol_m}(z)\geq
\Media{\frac{p\alpha}{n\alpha+1}}{\sum_{x\in C_0\cap\2Z}f_m(x)}{\!\sum_{y\in C_0\cap\2Z}g_m(y)}{\lambda}\!\!,
\end{equation}
where $M_p=(1-\lambda)\cdot C_0+_p\lambda\cdot C_0$ and $h_m^{\symbol_m}(z)=\sup_{u\in\Om}h_m(z+u)$.
Now, since $C$ is an $n$-dimensional convex body containing the origin, from \eqref{e:def_p-sum} we get
\begin{equation*}
  (1-\lambda)\cdot C_0+_p\lambda\cdot C_0 \subset (1-\lambda)\cdot C +_p \lambda\cdot C=C,
\end{equation*}
which, jointly with the fact that $(C+\Om)\cap\2Z=C\cap\2Z$, allows us to deduce (from \eqref{eq:bblpdtoc_thk_1})
that
\begin{equation}\label{eq:bblpdtoc_thk_2}
\sum_{z\in C\cap\2Z}h_m^{\symbol_m}(z)\geq\Media{\frac{p\alpha}{n\alpha+1}}{\sum_{x\in C_0\cap\2Z}f_m(x)}{\sum_{y\in C_0\cap\2Z}g_m(y)}{\lambda}.
\end{equation}
We now consider the function $\overline{h}:\R^n\longrightarrow\R_{\geq0}$ given by $\overline{h}(x)=\sup_{\theta\in3\Om}h(x+\theta)$, and show that, for every fixed $z\in\R^n$ and any $x\in z+\Om$, we have $\overline{h}(x)\geq h_m^{\symbol_m}(z)$. Indeed,
\begin{equation}\label{eq:bblpdtoc_ineqh}
\begin{split}
    \overline{h}(x)&=\sup_{\theta\in3\Om}h(x+\theta)=\sup_{w\in\Om}\sup_{v\in\Om}\sup_{u\in\Om}h(x+u+v+w)\\
    &\geq\sup_{w\in\Om}\sup_{v\in\Om}\sup_{u\in\Rm}h(x+u+v+w)=\sup_{w\in\Om}\sup_{v\in\Om}h_m(x+v+w)\\
    &=\sup_{w\in\Om}h_m^{\symbol_m}(x+w)\geq h_m^{\symbol_m}(z).
\end{split}
\end{equation}
Furthermore, for any $r>0$ let
\begin{equation*}
\begin{split}
C_r&=\bigl\{x\in C:h(x)\geq r\bigr\}\text{ and}\\
\overline{C}_r&=\bigl\{x\in C+\Rm:\overline{h}(x)> r\bigr\}.
\end{split}
\end{equation*}
Notice that the superlevel sets $C_r$ are compact, since $h$ is upper semicontinuous and $C$ is compact (see \cite[Theorem~1.6]{RW}),
and then we clearly have $C_r=\bigcap_{m=1}^\infty(C_r+3\Om)$.
Moreover, since $h$ vanishes outside $C$, from the definition of $\overline{h}$ we get
$\overline{C}_r\subset C_r+3\Om$ for all $r>0$.
Thus, by Fubini's theorem and the monotone convergence theorem, we obtain
\begin{equation*}
\begin{split}
\int_{C}h(x)\,\dlat x &=\int_0^\infty\vol(C_r)\,\dlat r=\int_0^\infty\vol\left(\bigcap_{m=1}^\infty(C_r+3\Om)\right)\,\dlat r\\
&=\int_0^\infty\lim_{m\to\infty}\vol(C_r+3\Om)\,\dlat r\\
&=\lim_{m\to\infty}\int_0^\infty\vol(C_r+3\Om)\,\dlat r
\geq\lim_{m\to\infty}\int_0^\infty\vol(\overline{C}_r)\,\dlat r\\
&=\lim_{m\to\infty}\int_{C+\Rm}\overline{h}(x)\,\dlat x.
\end{split}
\end{equation*}
This, together with \eqref{eq:bblpdtoc_ineqh} and the fact that $C+\Rm=C\cap\2Z+\Rm$, implies that
\begin{equation*}
\int_{C}h(x)\,\dlat x \geq\lim_{m\to\infty}\int_{C+\Rm}\overline{h}(x)\,\dlat x \geq
\lim_{m\to\infty}2^{-mn}\sum_{z\in C\cap\2Z}h_m^{\symbol_m}(z).
\end{equation*}
Finally, since $f$ is Riemann integrable and $2^{-mn}\sum_{x \in C_0\cap\2Z} f_m(x)$ is an \emph{upper Riemann sum} of $f$ for the partition $\{x + \Rm: x\in C_0\cap\2Z\}$ of $C$,
we clearly have
\begin{equation*}
\lim_{m\rightarrow\infty} 2^{-mn}\sum_{x \in C_0\cap\2Z} f_m(x) = \int_C f(x)\dlat x.
\end{equation*}
The same holds for the function $g$ and then, taking limits on both sides of \eqref{eq:bblpdtoc_thk_2},
we get \eqref{e:bblpdtoc_goal}. This finishes the proof.
\end{proof}

Due to the well-known fact that a function is Riemann integrable if and only if it is continuous almost everywhere, and since the boundary of a convex set has null measure (and taking also into account the characterization of the upper semicontinuity in terms of the level sets), we directly get Theorem~\ref{t:L_pBM_disc_to cont}, as a consequence of Theorem~\ref{t:bblp_disc_to_cont}. We emphasize the necessity of assuming convexity in Theorem~\ref{t:L_pBM_disc_to cont}: if one considers bounded measurable sets $K, L \subset\R^n$ of positive volume, containing no rational point, one cannot expect to recover the $L_p$ Brunn-Minkowski inequality \eqref{ineq:lpbm} by shrinking the lattice $\Z^n$ by means of successively considering $2^{-m}\Z^n$, $m\in\N$.

\smallskip

\end{document}